\newtheorem{thm}{Theorem}[section]
\newtheorem{cor}[thm]{Corollary}
\newtheorem{lem}[thm]{Lemma}
\newtheorem{prop}[thm]{Proposition}
\newtheorem{defn}[thm]{Definition}
\newtheorem{example}[thm]{Example}
    \def\ot{\otimes}
\def\Ext{\operatorname {Ext}}
\def\Mod{\operatorname {Mod}}
\def\k{\mathbbm{k}}
\def\GL{\operatorname{GL}}
\def\id{\operatorname {id}}
\def\SGMCM{\operatorname {\underline{CM}^{\mathbbm{Z}}}}
\def\GrAut{\operatorname{GrAut}}
\def\Aut{\operatorname{Aut}}
\def\gld{\operatorname{gldim}}
\def\mod{\operatorname {mod}}
\def\sym{\operatorname {Sym}}
\def\grm{\operatorname{gr}}
\def\Gr{\operatorname{Gr}}
\def\tor{\operatorname{tor}}
\def\qgr{\operatorname{qgr}}
\title[Non-Comm. Conics asso. to Symm. RSP]{Classification of Noncommutative Conics associated to Symmetric Regular Superpotentials}
\author{Haigang Hu} %{Shizuoka University}
\address{Graduate School of Science and Technology, Shizuoka University, Ohya 836, Shizuoka 422-8529, Japan
}
\email{h.hu.19@shizuoka.ac.jp}
\keywords{Quantum polynomial algebra, symmetric regular superpotential, noncommutative conic.}
\begin{document}

\begin{abstract}
Let $S$ be a $3$-dimensional quantum polynomial algebra, and $f \in S_2$ a central regular element. The quotient algebra $A = S/(f)$ is called a noncommutative conic. For a noncommutative conic $A$, there is a finite dimensional algebra $C(A)$ which determines the singularity of $A$. In this paper, we mainly focus on a noncommutative conic such that its quadratic dual is commutative, which is equivalent to say, $S$ is determined by a symmetric regular superpotential. We classify these noncommutative conics up to isomorphism of the pairs $(S,f)$, and calculate the algebras $C(A)$. 
\end{abstract}
 
\maketitle

\thispagestyle{empty}

\section{Introduction}

Let $S$ be an $n$-dimensional quantum polynomial algebra, and $f \in S_2$ a central regular element of $S$. The quotient algebra $A: = S/(f)$ is called a {\it noncommutative quadric hypersurface}. The classification of noncommutative quadric hypersurfaces is a big project in noncommutative algebraic geometry.

Smith and Van den Bergh introduced a finite dimensional algebra $C(A)$ associated to $A$. Let $\SGMCM A$ be the stable category of graded maximal Cohen-Macaulay modules over $A$, $\mod C(A)$ the category of finitely generated right $C(A)$-modules, and $\operatorname{D^b}(\mod C(A))$ the associated bounded derived category. Smith and Van den Bergh proved that $\SGMCM A$ is equivalent to $\operatorname{D^b}(\mod C(A))$ as triangulated categories (\cite[Proposition 5.2]{SmV}). Moreover, He and Ye proved that $A$ is a noncommutative isolated singularity if and only if $C(A)$ is semisimple (\cite[Theorem 4.6]{HY2}). It turns out that the algebra $C(A)$ is an important tool to study the noncommutative quadric hypersurface $A$ (see also \cite{MoU}). 

Let $S^!$ be the quadratic dual of $S = T(V)/(R)$. He and Ye introduced the notion of Clifford deformation of $S^!$ in  \cite{HY1}.  A Clifford deformation $S^!(\theta)$ of $S^!$ is a nonhomogeneous PBW deformation (\cite[Chapter 5]{PolPos}) defined by a certain linear map $\theta: R^\perp \to \k$. They showed that a Clifford deformation is a strongly $\mathbbm{Z}_2$-graded algebra and its degree zero part $S^!(\theta)_0 \cong C(A)$ (\cite[Propositions 3.6 and 4.3]{HY1}). In this paper, we calculate algebras $C(A)$ by Clifford deformations. 

We call $A = S / (f)$ a {\it noncommutative conic} when $S$ is a $3$-dimensional quantum polynomial algebra. By \cite{D,BoScW,MoSm1}, every $3$-dimensional quantum polynomial algebra $S$ is isomorphic to a derivation quotient algebra $\mathcal{D}(\omega)$ for some unique (up to scalar) regular twisted superpotential $\omega \in V^{\ot 3}$. We call $\omega$ {\it symmetric} if $\omega$ is a symmetric tensor. In this paper, we are interested in a noncommutative conic $A$ such that its quadratic dual $A^!$ is commutative. We find that $A^!$ is commutative if and only if $S$ is determined by a symmetric regular twisted superpotential (see Theorem \ref{thm-srs}). Itaba and Matsuno give a complete list of regular twisted superpotentials (\cite{IMa,Ma}). Thus we can easily find symmetric regular twisted superpotentials among them. It turns out that there are four types of symmetric regular twisted superpotentials: Type S, Type S$'$, Type NC, Type EC, and they are all symmetric regular superpotentials (see Lemma \ref{lem-pre-type}). 

We say two pairs $(S,f), (S',f')$ are isomorphic if there is an isomorphism of graded algebras $\varphi: S \to S'$ such that $\varphi(f) = f'$. In this paper, we classify noncommutative conics associated to symmetric regular superpotentials up to isomorphism of the pairs $(S,f)$ (see Theorem \ref{ec-gra-ale}, Theorem \ref{fphif}, and Theorem \ref{prop-tab-sf-1}), we also give the corresponding algebras $C(A)$ except for Type EC (see Theorem \ref{ca-thm1}). For some calculations of algebras $C(A)$ of Type EC, see Example \ref{ca-tpye-ec}.

This paper is organized as follows. In Section \ref{sec-pre}, we explain definitions and notations in this paper and give some propositions we will need. In Section \ref{srs}, we give some properties about symmetric regular superpotentials. In Section \ref{ca}, we give our main results about the classification of noncommutative conics associated to symmetric regular superpotentials. We also find that the set of isomorphism classes of $4$-dimensional commutative Frobenius algebras is equal to the set of isomorphism classes of algebras $C(A)$ for noncommutative conics $A$ associated to symmetric regular superpotentials (see Corollary \ref{ca-thm2}).

\section{Preliminaries} \label{sec-pre}

Throughout this paper, $\k$ is an algebraically closed field of characteristic zero, and $\k^\times$ is the subset of $\k$ consisting of nonzero elements. All algebras, vector spaces and tensor products are over $\k$. A graded algebra is a $\mathbb{Z}$-graded algebra unless otherwise mentioned. A graded algebra $A$ is said to be locally finite if $\dim_\k A_n < \infty$ for all $n \in \mathbbm{Z}$. A connected graded algebra $A$ is a graded algebra with $A_n = 0$ for all $n < 0$ and $A_0 = \k$. 

Let $V$ be a  finite dimensional vector space. We write $\GL(V)$ for the general linear group of $V$, and write $T(V)$ for the tensor algebra over $V$.

Let $A$ be a graded algebra. We write $\Gr A$ for the category of graded right $A$-modules with morphisms preserving gradings, and $\grm A$ for the full subcategory of $\Gr A$ consisting of finitely generated graded right $A$-modules. If in addition $A$ is noetherian, let $\tor A$ be the full subcategory of $\grm A$ consisting of finite dimensional graded right $A$-modules. We will call $A$ a {\it noncommutative isolated singularity} if the quotient category $\qgr A : = \grm A / \tor A$ has finite global dimension.

\subsection{Quadratic algebras and Koszul algebras} Let $V$ be a finite dimensional vector space. A {\it quadratic algebra} $A = T(V)/(R)$ is a quotient algebra of the tensor algebra $T(V)$ where $R \subset V \ot V$. The {\it quadratic dual} of $A$ is the quadratic algebra $A^!: = T(V^*)/(R^\perp)$ where $V^*$ is the dual vector space and $R^\perp \subset V^* \ot V^*$ is the orthogonal complement of $R$.  Since $R^* \cong A^!_2 $, a linear map $\theta : R \to \k$ can be regarded as an element in $A^!_2$.

A locally finite connected graded algebra $A$ is called a {\it Koszul algebra} (\cite{ShTi}) if the trivial module $\k_A$ has a free resolution
$$
\xymatrix{
\cdots \ar[r] & P^n \ar[r] & \cdots \ar[r] & P^1 \ar[r] & P^0 \ar[r] & \k \ar[r] & 0
}
$$
where $P^n$ is a graded free module generated in degree $n$ for each $n \geq 0$. A Koszul algebra $A$ is a quadratic algebra, and its quadratic dual $A^!$ is also a Koszul algebra (\cite{ShTi}).

\subsection{Quantum polynomial algebras} 
Here we follow definitions in \cite{MoU} (see also \cite{ShTi}).
\begin{defn}{\rm 
A noetherian connected graded algebra $S$ is called a $d$-dimensional {\it quantum polynomial algebra} if it satisfies
\begin{itemize}
\item[(1)] the global dimension $\gld {S} = d < \infty$,
\item[(2)] (Gorenstein condition) $\Ext^i_S(\k,S) \cong \left\{ \begin{matrix} \k & \text{if} \ i = d,  \\ 0 & \text{if} \ i \neq d, \end{matrix} \right.$ 
\item[(3)] $H_S(t) = (1-t)^{-d}$ 
\end{itemize}
where $H_S(t) : = \sum_n (\dim_{\k} S_n) t^n$ is the Hilbert series. 
}
\end{defn}

When $S$ satisfies conditions $(1),(2)$ above, $S$ is called an {\it Artin-Schelter regular algebra} (\cite{AS}). Note that a quantum polynomial algebra $S$ is Koszul (\cite[Theorem 2.2]{ShTi}). Artin, Tate and Van den Bergh proved that a $3$-dimensional quantum polynomial algebra is a domain (\cite{ATV}).

\subsection{Noncommutative quadric hypersurfaces} 

Let $S$ be a quantum polynomial algebra, and $f \in S_2$ a central regular element of $S$. The quotient algebra $A = S/(f)$ is called a noncommutative quadric hypersurface. When $S$ is a $3$-dimensional quantum polynomial algebra, we will call $A$ a {\it noncommutative conic}. Smith and Van den Bergh defined the algebra (\cite{SmV})
$$
C(A) : = A^! [(f^!)^{-1}]_0
$$ 
where $f^!$ is explained in the following proposition. 

\begin{prop} \label{nqh-prop1}
Let $S$ and $A$ be defined above. Then
\begin{itemize}
\item[(1)] $A$ is a Koszul algebra,
\item[(2)] there is a central regular element $f^! \in A^!_2$ unique up to scalar  such that $A^! / (f^!) = S^!$,
\item[(3)] $\dim_{\k}C(A) = 2^{n-1}$,
\item[(4)] two triangulated categories $\SGMCM A$ and $\operatorname{D^b} (\mod C(A))$ are equivalent,
\item[(5)] $A$ is a noncommutative isolated singularity if and only if $C(A)$ is semisimple. 
\end{itemize}
\end{prop}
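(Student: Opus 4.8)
All five assertions are assembled from the literature, and I would split the proof accordingly: (1)--(3) are elementary consequences of Koszul duality together with Hilbert-series bookkeeping, and I would prove them directly, whereas (4) and (5) are the deep inputs and I would simply quote \cite[Proposition 5.2]{SmV} and \cite[Theorem 4.6]{HY2}.

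\emph{Parts (1) and (2).} Since $S$ is a quantum polynomial algebra it is Koszul by \cite[Theorem 2.2]{ShTi}. Assertion (1) is then the standard fact that the quotient of a Koszul algebra by a central regular element of degree $2$ is again Koszul: from the exact sequence $0 \to S(-2) \xrightarrow{\cdot f} S \to A \to 0$ of graded $S$-modules one computes that $\Tor^S_\bullet(A,\k)$ is concentrated in homological degrees $0,1$ with internal degrees $0,2$, and inserting this into the change-of-rings spectral sequence converging to $\Tor^S_\bullet(\k,\k)$ and comparing internal degrees forces $\Tor^A_n(\k,\k)$ to sit in internal degree $n$, i.e.\ $A$ is Koszul (cf.\ \cite{SmV}); in particular $A$ is quadratic and $H_A(t)=(1-t^2)H_S(t)=(1+t)(1-t)^{-(n-1)}$. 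For (2): as $f$ is regular it is nonzero in $S_2=(V\ot V)/R_S$, so its lift $f'\in V\ot V$ is not in $R_S$ and the degree-$2$ relation space of $A$ is $R_A=R_S\oplus\k f'$; dualizing, $R_A^\perp$ is a hyperplane in $R_S^\perp=S^!_2$, so the image of $R_S^\perp$ in $A^!_2=(V^*\ot V^*)/R_A^\perp$ is one-dimensional and I would take $f^!$ to be a generator of it. By construction $A^!/(f^!)$ is the quadratic algebra with relation space $R_S^\perp$, which is $S^!$ through degree $2$; centrality of $f^!$ follows from centrality of $f$ by tracking the canonical pairing between $S$ and $S^!$ (as in \cite{SmV}), and regularity follows from Hilbert series: $A$ and $S$ Koszul give $H_{A^!}(t)=H_A(-t)^{-1}=(1+t)^{n-1}(1-t)^{-1}$ and $H_{S^!}(t)=H_S(-t)^{-1}=(1+t)^{n}$, which satisfy $(1-t^2)H_{A^!}(t)=H_{S^!}(t)$; since multiplication by $f^!$ is graded of degree $2$ with cokernel $S^!$, this equality of series forces it to be injective, and then $A^!/(f^!)\cong S^!$ as graded algebras.

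\emph{Part (3).} Given that $f^!\in A^!_2$ is central and regular, the localization $A^![(f^!)^{-1}]$ is $2$-periodic as a graded algebra and $C(A)=A^![(f^!)^{-1}]_0=\varinjlim\bigl(A^!_0\xrightarrow{f^!}A^!_2\xrightarrow{f^!}A^!_4\to\cdots\bigr)$, a direct limit of injections. From $H_{A^!}(t)=(1+t)^{n-1}(1-t)^{-1}$ one reads off $\dim_\k A^!_m=\sum_{j=0}^{\min(m,\,n-1)}\binom{n-1}{j}$, which equals $2^{n-1}$ for every $m\ge n-1$; hence the transition maps are bijective once $m\ge n-1$, and $\dim_\k C(A)=2^{n-1}$.

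\emph{Parts (4) and (5).} These are exactly \cite[Proposition 5.2]{SmV} and \cite[Theorem 4.6]{HY2}, which I would invoke directly. For orientation, (4) rests on a Buchweitz/Orlov-type identification of $\SGMCM A$ with the singularity category of $A$, followed by a Koszul-duality computation that, after inverting $f^!$, turns the ambient category into $\mod C(A)$, using the finiteness $\dim_\k C(A)=2^{n-1}$ obtained in (3); and (5) follows by combining (4) with the observation that $C(A)$ is semisimple if and only if $\operatorname{D^b}(\mod C(A))$ --- equivalently, by (4), $\SGMCM A$ --- is semisimple, which \cite{HY2} match with $\qgr A$ having finite global dimension, i.e.\ $A$ being a noncommutative isolated singularity. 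The genuine obstacle is thus (4)--(5): a from-scratch argument would reproduce the main theorems of \cite{SmV,HY2}, so in this paper they are cited rather than reproved.
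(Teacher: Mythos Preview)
Your proposal is correct. The paper's own proof is considerably shorter: it simply cites \cite[Lemma~5.1, Proposition~5.2]{SmV} (and \cite[Lemmas~4.11 and 4.13]{MoU}) for (1)--(4) and \cite[Theorem~4.6]{HY2} for (5), without unpacking any of the arguments. Your write-up differs in that you actually \emph{prove} (1)--(3) from first principles---the change-of-rings spectral sequence for Koszulity of $A$, the explicit identification $R_A=R_S\oplus\k f'$ to locate $f^!$, and the Hilbert-series computation $H_{A^!}(t)=(1+t)^{n-1}(1-t)^{-1}$ to read off $\dim_\k C(A)=2^{n-1}$---and only defer to the literature for the hard parts (4)--(5). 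Both approaches are valid; yours is more self-contained and makes the elementary content visible, while the paper's is appropriate for a preliminaries section that merely collects known facts. One small remark: once you have $R_A^\perp+\k\hat f^!=R_S^\perp$, the equality $A^!/(f^!)=S^!$ is immediate as graded algebras (both are $T(V^*)$ modulo the two-sided ideal generated by $R_S^\perp$), so the Hilbert-series step is needed only for regularity of $f^!$, not for the identification itself.
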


\begin{proof}
$(1), (2), (3), (4)$ follow from \cite[Lemma 5.1, Proposition 5.2]{SmV} (see also \cite[Lemmas 4.11 and 4.13]{MoU}). $(5)$ follows from \cite[Theorem 4.6]{HY2}.
\end{proof}

\subsection{Clifford deformations} Let $S^! = T(V^*)/(R^\perp)$ be the quadratic dual of a Koszul Artin-Schelter regular algebra $S = T(V)/(R)$. He and Ye defined the Clifford deformation of $S^!$ (\cite[Definition 2.1]{HY1}) which is a special case of nonhomogeneous PBW deformations (\cite[Chapter 5]{PolPos}). 

\begin{defn} \label{defn-cd} {\rm
Let $\theta : R^\perp \to \k$ be a linear map. If $(\theta \ot 1- 1 \ot \theta)(V^* \ot R^\perp \cap R^\perp \ot V^*) = 0$, then we call $\theta$ a {\it Clifford map} of the Koszul algebra $S^!$, and call the associative algebra
$$
S^!(\theta) : = T(V^*) / (r - \theta(r): r \in R^\perp)
$$
the {\it Clifford deformation} of $S^!$ associated to $\theta$.
}
\end{defn}

We may view a tensor algebra $T(V)$ as a $\mathbbm{Z}_2$-graded algebra by setting $T(V)_0 = \bigoplus_{n \geq 0} V^{\ot 2n}$ and $T(V)_1 = \bigoplus_{n \geq 1} V^{\ot 2n - 1}$. Consider the Clifford deformation $S^!(\theta) = T(V^*) / (r - \theta(r): r \in R^\perp)$. Since $R^\perp \subset V^* \ot V^* \subset T(V^*)_0$, it follows that the ideal $(r - \theta(r): r \in R^\perp)$ is homogeneous. Therefore, $S^!(\theta)$ is a $\mathbbm{Z}_2$-graded algebra. A linear map $\theta: R^\perp \to \k$ is a Clifford map if and only if $\theta$ as an element in $S$ is a central element with degree two (\cite[Lemma 2.8]{HY1}). We write $\theta_f$ for the Clifford map induced by a central regular element $f \in S_2$ . 

Recall that a {\it Frobenius algebra} (\cite{L}) is a finite dimensional algebra $\mathcal{A}$ together with a nondegenerate associative bilinear pairing $(-,-): \mathcal{A} \times \mathcal{A} \to \k$. Let $A = S/(f)$ be the noncommutative quadric hypersurface. He and Ye proved the following results (\cite[Lemma 3.3, Proposition 3.6, Proposition 4.3]{HY1}). 

\begin{prop} \label{cd-prop}
Let $A = S/(f)$ and $S^!(\theta_f)$ be as defined above. Then
\begin{itemize}
\item[(1)] $S^!(\theta_f)$ is a strongly $\mathbbm{Z}_2$-graded algebra, and $S^!(\theta_f)_0 \cong C(A)$, and
\item[(2)] $S^!(\theta_f)$ is a Frobenius algebra. 
\end{itemize}
\end{prop}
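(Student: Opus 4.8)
The plan is to deduce both parts from one structural identification: that $S^!(\theta_f)$ is, on the one hand, a strongly $\mathbbm{Z}_2$-graded algebra manufactured from $A^!$ by inverting and specialising $f^!$, and, on the other hand, a PBW deformation of the graded Frobenius algebra $S^!$. Write $S=T(V)/(R)$, $A=S/(f)$, and let $f^!\in A^!_2$ be the central regular element of Proposition~\ref{nqh-prop1}(2), so $A^!/(f^!)=S^!$; normalising so that $f^!$ is the image in $A^!$ of an element $g\in R^\perp$ with $\theta_f(g)=1$, one has $r\equiv\theta_f(r)f^!$ in $A^!$ for every $r\in R^\perp$. Set $B:=A^!/(f^!-1)$. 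Since $f^!-1$ is the sum of its components of degrees $2$ and $0$, the ideal $(f^!-1)$ is stable under reduction of the $\mathbbm{Z}$-grading of $A^!$ modulo $2$, so $B=B_0\oplus B_1$ is $\mathbbm{Z}_2$-graded with $V^*$ in degree $1$. It is strongly graded: $A^!$ is generated in degree one, so $B$ is generated by $B_1$, and $1=f^!\in A^!_1A^!_1\subseteq B_1B_1$, which forces $B_0=B_1B_1$. Finally, since $f^!$ is regular, multiplication by $f^!$ embeds $A^!_{2k}$ into $A^!_{2k+2}$ and these embeddings are identified in $B$, whence $B_0\cong\varinjlim\big(A^!_0\xrightarrow{f^!}A^!_2\xrightarrow{f^!}\cdots\big)=A^![(f^!)^{-1}]_0=C(A)$.

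Next I would compare $S^!(\theta_f)$ with $B$. The identity on $V^*$ extends to an algebra homomorphism $\Phi\colon S^!(\theta_f)\to B$, because each defining relation $r-\theta_f(r)$ ($r\in R^\perp$) is sent to $\theta_f(r)f^!-\theta_f(r)=\theta_f(r)(f^!-1)\equiv 0$ in $B$. Give $S^!(\theta_f)$ its word-length (PBW) filtration $F_\bullet$ and $B$ the filtration induced by the grading of $A^!$; then $\Phi$ is filtered. The Clifford-map condition is precisely the PBW condition, so $\operatorname{gr}S^!(\theta_f)\cong S^!$; likewise $\operatorname{gr}B\cong A^!/(f^!)=S^!$, since the leading term of $f^!-1$ is $f^!$ and, by regularity of $f^!$, $f^!$ generates the leading-term ideal of $(f^!-1)$. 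The associated graded map $\operatorname{gr}\Phi$ is the identity on $V^*$, hence the identity automorphism of $S^!$, so $\Phi$ is an isomorphism of filtered algebras, and it manifestly respects the $\mathbbm{Z}_2$-gradings. Therefore $S^!(\theta_f)$ is strongly $\mathbbm{Z}_2$-graded and $S^!(\theta_f)_0\cong B_0\cong C(A)$, which is (1).

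For (2), recall that $S$ is Koszul with $H_S(t)=(1-t)^{-n}$, so by Koszul duality $H_{S^!}(t)=(1+t)^n$; in particular $S^!$ is concentrated in degrees $0,\dots,n$ with $S^!_n$ one-dimensional. The Artin--Schelter Gorenstein condition on $S$ is equivalent to $S^!$ being a graded Frobenius algebra whose Frobenius form $\psi$ is concentrated in degree $n$, inducing perfect pairings $S^!_i\times S^!_{n-i}\to S^!_n\cong\k$. Using the PBW identification $\operatorname{gr}S^!(\theta_f)\cong S^!$ again, the word-length filtration already exhausts at level $n$ (because $\operatorname{gr}_kS^!(\theta_f)=S^!_k=0$ for $k>n$), so $\phi(x):=\psi(x+F_{n-1})$ is a well-defined linear form on $S^!(\theta_f)$. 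It is a Frobenius form: if $x\neq 0$ has filtration degree $k$, its leading term $\bar x\in S^!_k$ is nonzero, so there is $\bar y\in S^!_{n-k}$ with $\psi(\bar x\bar y)\neq 0$; lifting $\bar y$ to $y$ and multiplying, $\phi(xy)=\psi(\bar x\bar y)\neq 0$, and symmetrically on the other side, so $(x,y)\mapsto\phi(xy)$ is nondegenerate. (Equivalently: a one-sided ideal $I\subseteq S^!(\theta_f)$ with $\phi(I)=0$ has $\operatorname{gr}I$ a graded one-sided ideal of $S^!$ killed by $\psi$, hence $\operatorname{gr}I=0$ and $I=0$.) Thus $S^!(\theta_f)$ is Frobenius.

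The argument is largely formal once two facts are in hand — that $\theta_f$, being a Clifford map, makes $S^!(\theta_f)$ a PBW deformation of $S^!$, and that $S^!$ is graded Frobenius — and these I take as known (they are the content of the cited lemmas). The steps requiring genuine care are the associated-graded computations: (i) identifying the leading-term ideal of $(f^!-1)$ in $A^!$ with $(f^!)$, which is exactly where regularity of $f^!$ is used, so that $\operatorname{gr}B\cong S^!$ and the comparison with $\operatorname{gr}S^!(\theta_f)$ through $\operatorname{gr}\Phi$ is legitimate; and (ii) the fact that the Frobenius form of $S^!$ is concentrated in the single top degree $n$ — only this makes the lifted form $\phi$ detect the top of the filtration, and hence stay nondegenerate on the deformation $S^!(\theta_f)$.
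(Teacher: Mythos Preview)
The paper does not supply its own proof of this proposition; it simply records that the result is due to He and Ye and cites \cite[Lemma 3.3, Proposition 3.6, Proposition 4.3]{HY1}. Your proposal is therefore not competing with an argument in the paper but rather filling in what the paper leaves to the reference, and it does so correctly. The identification $S^!(\theta_f)\cong A^!/(f^!-1)$ via a filtered comparison (with regularity of $f^!$ used exactly where you flag it, to compute the leading-term ideal of $(f^!-1)$), together with the direct-limit description of $A^![(f^!)^{-1}]_0$, gives (1); lifting the top-degree Frobenius form along the PBW filtration gives (2). These are essentially the mechanisms behind the He--Ye results being cited, so your write-up is faithful to the intended argument rather than a genuinely different route.

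Two small remarks. First, in the setting of the proposition $S$ is only assumed Koszul AS-regular, so the specific formula $H_{S^!}(t)=(1+t)^n$ need not hold; but your argument does not actually use it---you only need that $S^!$ is finite-dimensional and graded Frobenius with one-dimensional socle in the top degree, which is exactly the Koszul dual of the AS--Gorenstein condition. Second, your normalisation $\theta_f(g)=1$ with $g$ lifting $f^!$ is legitimate because $f^!$ is only determined up to scalar (Proposition~\ref{nqh-prop1}(2)) and $\theta_f$ vanishes on $R_A^\perp=\ker\theta_f$ but not on all of $R^\perp$; it may be worth saying this explicitly.
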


\begin{example}{\rm
Let $S = \k[x,y,z]$ be the polynomial algebra. Then $x^2$ is central regular. The quadratic dual of $S$ is (we still use $x,y,z$ for $x^*,y^*,z^*$)
$$
S^! = \k\langle x,y,z \rangle / (xy + yx, xz + zx, yz + zy, x^2, y^2, z^2).
$$
Then the Clifford deformation of $S^!$ associated to $\theta_{x^2}$ is 
$$
S^! (\theta_{x^2}) =  \k\langle x,y,z \rangle / (xy + yx, xz + zx, yz + zy, x^2 - 1, y^2, z^2).
$$
Moreover, the degree zero part $S^! (\theta_{x^2})_0$ is isomorphic to $\k\langle u,v \rangle/(u^2,v^2,uv+vu)$.
}
\end{example}

Recall that an algebra $\mathcal{A}$ is called {\it quasi-Frobenius}  (\cite{L}) if $\mathcal{A}$ is a left or right self-injective, and left or right noetherian ring. Note that a Frobenius algebra $\mathcal{A}$ is quasi-Frobenius (\cite[Proposition 3.14]{L}). Then we have the following lemma. 

\begin{lem} \label{cd-prop-2}
Keep notations as above. Then $C(A)$ is quasi-Frobenius. Moreover, if $A^!$ is commutative, then $C(A)$ is Frobenius. 
\end{lem}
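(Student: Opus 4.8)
The plan is to deduce both statements from Propositions \ref{nqh-prop1} and \ref{cd-prop} together with standard facts on strongly graded rings and on (quasi-)Frobenius algebras. Throughout, write $B := S^!(\theta_f)$; by Proposition \ref{cd-prop} this is a Frobenius algebra --- in particular finite dimensional and self-injective on both sides --- which is strongly $\mathbbm{Z}_2$-graded with $B_0 \cong C(A)$.

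For the first assertion I would show that self-injectivity descends from $B$ to $B_0$. Since $B$ is \emph{strongly} $\mathbbm{Z}_2$-graded we have $1 \in B_1 B_1$, say $1 = \sum_{i=1}^m u_i v_i$ with $u_i, v_i \in B_1$; then the $B_0$-linear map $B_0^{\oplus m} \to B_1$, $(c_i)_i \mapsto \sum_i u_i c_i$, has the section $b \mapsto (v_i b)_i$, so $B_1$ is a direct summand of the free right $B_0$-module $B_0^{\oplus m}$, and likewise a direct summand of a free left $B_0$-module. Hence $B = B_0 \oplus B_1$ is finitely generated projective, so flat, as a left $B_0$-module. Therefore restriction of scalars $\Mod\, B \to \Mod\, B_0$ along $B_0 \hookrightarrow B$ has the exact left adjoint $-\ot_{B_0} B$, and consequently preserves injective objects; applied to the injective right $B$-module $B$ it shows that $B$ is injective as a right $B_0$-module, and so is its direct summand $B_0$ (as a right $B_0$-module, $B$ splits as $B_0 \oplus B_1$). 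Thus $C(A) = B_0$ is right self-injective; since $\dim_{\k} C(A) = 2^{n-1} < \infty$ by Proposition \ref{nqh-prop1}(3) it is right noetherian, hence quasi-Frobenius.

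For the second assertion I would instead use the original description $C(A) = A^![(f^!)^{-1}]_0$. If $A^!$ is commutative, then $f^!$ is a central regular element of the commutative graded ring $A^!$, so the localization $A^![(f^!)^{-1}]$ is again commutative and hence so is its degree-zero subring $C(A)$. It remains to recall that a finite dimensional commutative algebra which is quasi-Frobenius is automatically Frobenius: writing it as a finite product of commutative local quasi-Frobenius algebras, each factor has a simple socle, and any linear functional non-vanishing on the socle of every factor yields a nondegenerate associative pairing. Combining this with the first part shows that $C(A)$ is Frobenius.

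The one genuinely delicate point is the descent of self-injectivity from $B$ to $B_0$, and the essential input there is that the $\mathbbm{Z}_2$-grading is \emph{strong}: for an arbitrary $\mathbbm{Z}_2$-graded algebra $B_1$ need not be $B_0$-projective and the argument breaks down --- this is exactly why Proposition \ref{cd-prop}(1) is invoked. One should also keep track of sides (the flatness used is that of $B$ as a \emph{left} $B_0$-module, matching the functor $-\ot_{B_0} B$ on right modules; the symmetric choice gives left self-injectivity of $C(A)$, which is equally good); everything else is formal.
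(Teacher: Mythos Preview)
Your proof is correct, and the overall architecture---descend quasi-Frobenius from $B=S^!(\theta_f)$ to $B_0\cong C(A)$, then upgrade to Frobenius via commutativity---matches the paper's. The difference lies in how the descent is carried out. The paper invokes the equivalence of categories $\Gr_{\mathbbm{Z}_2} B \cong \Mod C(A)$ coming from the strongly graded structure, together with a result from N\u{a}st\u{a}sescu--Van Oystaeyen (\cite[Corollaries 2.5.2 and 5.4.3]{NvO}) to the effect that for a finite grading group, quasi-Frobenius is detected in the graded module category; transporting this across the equivalence gives quasi-Frobenius for $C(A)$. You instead argue directly at the level of modules: strong grading makes $B_1$ a summand of a free $B_0$-module, hence $B$ is flat over $B_0$, so restriction along $B_0\hookrightarrow B$ preserves injectives, and self-injectivity of $B_0$ drops out as a summand. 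Your route is more elementary and self-contained (no black-box citations beyond the adjunction), while the paper's route is more conceptual and would generalize uniformly to other grading groups. For the second assertion the two proofs coincide (commutativity passes to the localization; commutative quasi-Frobenius is Frobenius), with the paper citing \cite[Corollary 16.22]{L} where you sketch the socle argument.
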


\begin{proof}
By Proposition \ref{cd-prop}, the associated Clifford deformation $S^!(\theta_f)$ is a Frobenius, strongly $\mathbbm{Z}_2$-graded algerba and $S^!(\theta_f)_0 \cong C(A)$. Then $S^!(\theta_f)$ is quasi-Frobenius, and we have an equivalence of abelian categories $\Gr_{\mathbbm{Z}_2} S^!(\theta_f) \cong \Mod C(A)$ where $\Mod C(A)$ is the category of right $C(A)$-modules. Since $\mathbbm{Z}_2$ is a finite group, $S^!(\theta_f)$ is quasi-Frobenius if and only if it is self-injective and noetherian in $\Gr_{\mathbbm{Z}_2} S^!(\theta_f)$ (\cite[Corollaries 2.5.2 and 5.4.3]{NvO}). It implies that $C(A)$ is quasi-Frobenius. Moreover, if $A^!$ is commutative, then its localization $C(A) = A^! [(f^!)^{-1}]_0 $ is also commutative. Thus by \cite[Corollary 16.22]{L}, $C(A)$ is Frobenius. 
\end{proof}

We can view a tensor algebra $T(V)$ as a filtered algebra with the filtration: 
$$
F_i T(V) = 0 \ \text{for} \ i < 0, \ F_iT(V) = \sum_{j = 0}^i V^{\ot j} \ \text{for} \ i \geq 0.
$$ 
Since a Clifford deformation $S^!(\theta_f) = T(V^*) / (r - \theta_f(r): r \in R^\perp)$ is a quotient algebra of $T(V^*)$, $S^!(\theta_f) $ is a filtered algebra with the induced filtration. Let $J_2 : = \{r - \theta(r) \mid r \in R^\perp \}$, and $J$ be the ideal of $T(V^*)$ generated by $J_2$, i.e. $J = (r - \theta(r): r \in R^\perp)$.

\begin{lem} \label{J}
\begin{itemize}
\item[(1)] $J \cap F_2 T(V^*)  = J_2$.
\item[(2)] $J \cap F_1 T(V^*)  = 0$.
\end{itemize}
\end{lem}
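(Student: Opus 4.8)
The plan is to reduce both statements to a Hilbert-function count, using that a Clifford deformation is a PBW deformation of $S^!$. Recall (\cite[Chapter 5]{PolPos}, as recalled before Definition \ref{defn-cd}) that, because $\theta$ is a Clifford map, $S^!(\theta) = T(V^*)/J$ is a nonhomogeneous PBW deformation of the quadratic algebra $S^! = T(V^*)/(R^\perp)$; concretely, the leading-term map induces an isomorphism of graded algebras $\operatorname{gr} S^!(\theta) \cong S^!$ for the filtration on $S^!(\theta)$ induced from $F_\bullet T(V^*)$. Consequently $\dim_\k F_n S^!(\theta) = \sum_{k=0}^{n} \dim_\k S^!_k$ for every $n \geq 0$, and since $R^\perp \subseteq V^* \ot V^*$ is genuinely quadratic we have $S^!_0 = \k$, $S^!_1 = V^*$ and $\dim_\k S^!_2 = \dim_\k(V^* \ot V^*) - \dim_\k R^\perp$.

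First I would prove (2). By construction of the induced filtration, $F_1 S^!(\theta)$ is the image of $F_1 T(V^*)$ under the quotient map $T(V^*) \twoheadrightarrow S^!(\theta)$, so $F_1 S^!(\theta) \cong F_1 T(V^*)/(J \cap F_1 T(V^*))$. Hence $\dim_\k(J \cap F_1 T(V^*)) = \dim_\k F_1 T(V^*) - \dim_\k F_1 S^!(\theta) = (1 + \dim_\k V^*) - (\dim_\k S^!_0 + \dim_\k S^!_1) = 0$, which is (2).

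For (1), the inclusion $J_2 \subseteq J \cap F_2 T(V^*)$ is clear, as $R^\perp \subseteq F_2 T(V^*)$ and $\theta(R^\perp) \subseteq \k \subseteq F_2 T(V^*)$. For the reverse inclusion I would argue by dimensions again: the linear map $R^\perp \to T(V^*)$, $r \mapsto r - \theta(r)$, is injective (its image lies in $F_2 T(V^*)$ and its degree-two component recovers $r$), so $\dim_\k J_2 = \dim_\k R^\perp$; and, as above, $F_2 S^!(\theta) \cong F_2 T(V^*)/(J \cap F_2 T(V^*))$ gives $\dim_\k(J \cap F_2 T(V^*)) = \dim_\k F_2 T(V^*) - \dim_\k F_2 S^!(\theta) = \dim_\k(V^* \ot V^*) - \dim_\k S^!_2 = \dim_\k R^\perp$. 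Since $J_2 \subseteq J \cap F_2 T(V^*)$ and the two are finite-dimensional subspaces of equal dimension, they coincide.

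The only real content here is the PBW identification $\operatorname{gr} S^!(\theta) \cong S^!$ — this is exactly where the Clifford-map hypothesis of Definition \ref{defn-cd} is used, and it is supplied by the cited nonhomogeneous PBW theory; the rest is bookkeeping with the filtration. So there is no genuine obstacle beyond correctly invoking that input. If a self-contained proof were wanted, one could instead run a direct confluence (Bergman Diamond Lemma) check on the rewriting rules $r \mapsto \theta(r)$, $r \in R^\perp$, but citing the PBW result is cleaner and avoids redoing that verification.
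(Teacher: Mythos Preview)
Your proof is correct and rests on the same input as the paper's: the nonhomogeneous PBW theorem of \cite[Chapter 5, Theorem 2.1]{PolPos}. The paper simply cites that theorem for (1) and then deduces (2) from (1) via $J\cap F_1T(V^*)\subseteq J\cap F_2T(V^*)=J_2$ together with $J_2\cap F_1T(V^*)=0$; you instead unpack the PBW isomorphism $\operatorname{gr}S^!(\theta)\cong S^!$ into a dimension count for both parts, which is a cosmetic rather than a substantive difference.
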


\begin{proof}
(1) follows from \cite[Theorem 2.1, Chapter 5]{PolPos}.

(2) Note $J_2 \cap F_1 T(V^*) = 0$. Then use $(1)$.
\end{proof}

\subsection{Geometric algebras and elliptic curves} 

Here we mainly follow definitions and notations in \cite{Ma,Mo}. Let $V$ be a finite dimensional vector space. For a quadratic algebra $S = T(V)/(R)$, set
$$
\mathcal{V}(R): = \{ (p,q) \in \mathbbm{P}(V^*) \times \mathbbm{P}(V^*)  \mid f(p,q) = 0 \ \text{for all} \ f \in R \}.
$$

\begin{defn}[\cite{Mo}] {\rm
A quadratic algebra $S = T(V)/(R)$ is called {\it geometric} if there is a pair $(E,\sigma)$ where $E \subset \mathbbm{P}(V^*)$ is a projective variety and $\sigma$ is a $\k$-automorphism of $E$ such that
\begin{itemize}
\item[(G1):] $\mathcal{V}(R) = \{ (p ,\sigma(p)) \in \mathbbm{P}(V^*) \times \mathbbm{P}(V^*) \mid p \in E \}$, 
\item[(G2):] $R = \{f \in V \ot V \mid f(p ,\sigma(p)) = 0 \ \text{for all}\ p \in E \}$.
\end{itemize}
If $S$ is geometric, then we write $S = \mathcal{A}(E,\sigma)$. 
}
\end{defn}

\begin{lem}\cite[Remark 4.9]{Mo}  \label{eett}
Let $S = \mathcal{A}(E, \sigma)$ and $S' = \mathcal{A}(E', \sigma')$ be two geometric algebras. Then $S, S'$ are isomorphic as graded algebras if and only if $S_1 = S'_1$ and there is an automorphism $\tau$ of $\mathbbm{P}(V^*)$ which sends $E$ isomorphically onto $E'$ such that the following diagram
$$
\xymatrix{
E \ar[r]^-{\tau} \ar[d] _-{\sigma}& E' \ar[d]^-{\sigma'} \\
E \ar[r]^-{\tau} & E'
}
$$
commutes. 
\end{lem}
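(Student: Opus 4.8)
The plan is to pass through the standard dictionary between graded-algebra isomorphisms of quadratic algebras and linear isomorphisms of the degree-one parts that carry one space of relations onto the other, and then to translate that into the geometric data $(E,\sigma)$ using (G1) and (G2); this is, in essence, \cite[Remark 4.9]{Mo}. Concretely, I would first record the elementary facts about quadratic algebras: $T(V)/(R)$ is generated in degree $1$, so a graded algebra homomorphism $S\to S'$ is determined by its degree-one component, and a linear map $\phi\colon V\to V$ extends to a graded algebra isomorphism $T(V)/(R)\to T(V)/(R')$ if and only if $\phi\in\GL(V)$ and $\phi^{\ot 2}(R)=R'$. Identifying $S_1=S_1'=:V$ (the role of the hypothesis $S_1=S_1'$), the left-hand side of the lemma thus becomes: there exists $\phi\in\GL(V)$ with $\phi^{\ot 2}(R)=R'$, and it remains to match this with the existence of $\tau$.

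For the translation I would use the identity $(\phi^{\ot 2}f)(p,q)=f(\phi^{T}p,\phi^{T}q)$, valid for $f\in V\ot V$ regarded as a bilinear form on $V^*\times V^*$ and $\phi^{T}\in\GL(V^*)$ the transpose; it gives $\mathcal{V}(\phi^{\ot 2}(R))=(\tau\times\tau)(\mathcal{V}(R))$ where $\tau\in\Aut(\mathbbm{P}(V^*))$ is the projective automorphism induced by $(\phi^{T})^{-1}$. The forward direction then drops out of (G1): from $\phi^{\ot 2}(R)=R'$ we get $\{(\tau p,\tau\sigma(p)):p\in E\}=\mathcal{V}(R')=\{(p',\sigma'(p')):p'\in E'\}$; comparing first coordinates yields $\tau(E)=E'$, and comparing the pairs yields $\tau\circ\sigma=\sigma'\circ\tau$, which is exactly the commuting square. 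For the converse, given such a $\tau$ choose any lift $g\in\GL(V^*)$ and set $\phi:=(g^{T})^{-1}\in\GL(V)$, so that $\mathcal{V}(\phi^{\ot 2}(R))=(\tau\times\tau)\mathcal{V}(R)=\mathcal{V}(R')$; then for each $r\in R$ and $p'\in E'$, using $\sigma\circ\tau^{-1}=\tau^{-1}\circ\sigma'$ and (G2) for $S$ one finds that $(\phi^{\ot 2}r)(p',\sigma'(p'))$ agrees up to a nonzero scalar with $r(\tau^{-1}p',\sigma(\tau^{-1}p'))=0$, so (G2) for $S'$ gives $\phi^{\ot 2}(R)\subseteq R'$; running the same argument with $\tau^{-1},\phi^{-1}$ gives the reverse inclusion, hence $\phi^{\ot 2}(R)=R'$.

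The step I expect to be the real obstacle is precisely the converse: a compatible isomorphism of point schemes only controls the vanishing loci $\mathcal{V}(R),\mathcal{V}(R')$, and in general a vanishing locus does not recover the space of relations, so the passage from $\mathcal{V}(\phi^{\ot 2}R)=\mathcal{V}(R')$ to $\phi^{\ot 2}(R)=R'$ is not formal. This is exactly where geometricity enters: condition (G2) says $R'$ is the \emph{full} space of bilinear forms vanishing on the graph $\{(p',\sigma'(p')):p'\in E'\}$, which is what upgrades the equality of vanishing loci to equality of relation spaces. The only other thing to keep straight is the transpose-and-inverse bookkeeping forced by the fact that relations live in $V\ot V$ while the point scheme lives in $\mathbbm{P}(V^*)$, together with the harmless indeterminacy that a projective automorphism lifts to $\GL(V^*)$ only up to a scalar — harmless because every vanishing condition in sight is projective.
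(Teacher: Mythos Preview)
The paper does not supply its own proof of this lemma; it simply quotes the result from \cite[Remark 4.9]{Mo} and uses it as a black box. Your argument is essentially the standard one and is correct: the reduction to $\phi\in\GL(V)$ with $\phi^{\ot 2}(R)=R'$ is the usual dictionary for quadratic algebras, and your identification of (G2) as exactly the ingredient that upgrades equality of vanishing loci to equality of relation spaces is the point of the converse.
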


We say that a geometric algebra $\mathcal{A}(E, \sigma)$ is of Type EC if $E$ is an elliptic curve in $\mathbbm{P}^2$. Every elliptic curve $E$ can be written in the Hesse form 
$
E = \mathcal{V}(x^3 + y^3 + z^3 -3 \lambda xyz)
$ where $\lambda \in \k$, and $\lambda^3 \neq 1$ (\cite[Corollary 2.18]{F}). The $j$-invariant of a Hesse form is defined as
$$
j(E) = \frac{27 \lambda^3 (\lambda^3 + 8)^3}{(\lambda^3 - 1)^3}.
$$
By \cite[Theorem IV 4.1]{Ha}, we know that the $j$-invariant classifies elliptic curves in $\mathbbm{P}^2$ up to projective equivalence. A nonsingular point $p\in E$ is called a {\it flex} of $E$ if the curve $E$ intersect the tangent at $p$ with multiplicity at least three at $p$. Note that projective equivalences of elliptic curves preserve flexes \cite[Chapter 3.8]{B}.

Let $o_E: = (1:-1:0) \in E$. Then $o_E$ is a flex of $E$ (\cite{F}), and the group structure on $(E,o_E)$ is given as follows: 
\begin{equation} \label{group-str}
p + q = \left\{ \begin{matrix} (ac\beta^2 - b^2 \alpha \gamma: bc \alpha^2 - a^2 \beta \gamma: ab \gamma^2 - c^2 \alpha \beta) &\text{or} \\
(ab\alpha^2 - c^2\beta \gamma: ac\gamma^2-b^2\alpha\beta: bc\beta^2-a^2\alpha\gamma) \end{matrix} \right.
\end{equation}
where $p = (a:b:c), q = (\alpha: \beta: \gamma) \in E$. Then $o_E$ is the zero element under this addition. For a point $p = (a:b:c) \in E$, a $\k$-automorphism $\sigma_p$ of $E$ defined as 
$$
\sigma_p: E \to E, \ q \mapsto q + p
$$
is called a {\it translation} by the point $p \in E$.  We define
\begin{equation*}
\begin{aligned}
\Aut_\k (E,o_E): &= \{\sigma \in \Aut_\k E \mid \sigma (o_E) = o_E\}, \\
E[2]:&= \{p\in E \mid p + p = o_E \}.
\end{aligned}
\end{equation*}
A point $p \in E[2]$ is called a {\it $2$-torsion point}. A point $p = (a:b:c) \in E$ such that $p \neq o_E$ is a $2$-torsion point if and only if $a=b$ (\cite[Lemma 2.16]{Ma}). Let $(1:1:\xi)$ be a $2$-torsion point on $E$. Then by the Hesse form of $E$, we have $3 \lambda \xi  = 2+ \xi^3$ which has discriminant $-108 + 108 \lambda^3$. Since $\lambda^3 \neq 1$, we have that $\xi \neq 0,-2, 1 \pm \sqrt{-3},$ and $\xi^3 \neq 1$, and $-108 + 108 \lambda^3 \neq 0$. Thus the equation $3 \lambda \xi  = 2+ \xi^3$ does not have a multiple root. By \cite[Proposition 3.1]{F}, $E[2]$ is a group of degree $4$. So we can write
$$
E[2] = \{ o_E, (1:1:\xi) \mid  3 \lambda \xi  = 2+ \xi^3\}.
$$
We will call a point $(1:1:\xi) \in E[2]\setminus{o_E}$ a {\it primitive} $2$-torsion point. For a primitive $2$-torsion point $(1:1:\xi)$, we write 
\begin{equation} \label{eq-pre-ga}
\lambda(\xi) := \frac{2+\xi^3}{3\xi}.
\end{equation}

As a special case of \cite[Theorem 3.1]{Ma}, we have the following lemma.

\begin{lem} \label{lem-pre-aa}
Let $(1:1:\xi),(1:1:\xi')$ be two primitive $2$-torsion points on $E$. Then $\mathcal{A}(E, \sigma_{(1:1:\xi)})$, $\mathcal{A}(E, \sigma_{(1:1:\xi')})$ are isomorphic as graded algebras if and only if $ \tau^l(1:1:\xi) =(1:1: \xi')$ where $\tau$ is a generator of $\Aut_{\k}(E,o_E)$ and $l \in \mathbbm{Z}_{|\tau|}$.
\end{lem}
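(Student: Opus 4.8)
The plan is to deduce the lemma from the geometric-algebra isomorphism criterion of Lemma \ref{eett} together with the standard description of $\Aut E$ for a smooth plane cubic. Write $p = (1:1:\xi)$ and $p' = (1:1:\xi')$, and set $V = S_1$, so that $E \subset \mathbbm{P}(V^*)$ and the two conics $\mathcal{A}(E,\sigma_p)$ and $\mathcal{A}(E,\sigma_{p'})$ have the same degree-one part $V$. By Lemma \ref{eett} (in which I write $\rho$ for the automorphism there called $\tau$, to avoid a clash with the $\tau$ in the present statement), $\mathcal{A}(E,\sigma_p)$ and $\mathcal{A}(E,\sigma_{p'})$ are isomorphic as graded algebras if and only if there is an automorphism $\rho$ of $\mathbbm{P}(V^*)$ carrying $E$ onto $E$ with $\rho \circ \sigma_p = \sigma_{p'} \circ \rho$ on $E$.

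I would then use two facts about $(E, o_E)$. First, every automorphism $\psi$ of the abstract curve $E$ factors as $\psi = \sigma_t \circ g$ with $t = \psi(o_E) \in E$ and $g := \sigma_{-t} \circ \psi \in \Aut_\k(E, o_E)$, and such a $g$ is a group automorphism of $(E, o_E)$, so in particular it permutes $E[2]$. Second, the embedding $E \hookrightarrow \mathbbm{P}(V^*)$ is the one associated to the complete linear system $|3 o_E|$ --- here one uses, as recalled above, that $o_E$ is a flex, so the tangent line at $o_E$ cuts out the divisor $3 o_E$ --- and hence every $g \in \Aut_\k(E, o_E)$ preserves this linear system and is induced by some $\rho_g \in \Aut \mathbbm{P}(V^*)$ with $\rho_g|_E = g$, which in particular carries $E$ onto $E$. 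Finally, since $\k$ has characteristic zero, $\Aut_\k(E, o_E)$ is a finite cyclic group; let $\tau$ be a generator, as in the statement.

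Next comes the conjugation computation. For $\rho$ as in Lemma \ref{eett}, write $\rho|_E = \sigma_t \circ g$ as above; then, using that $g$ respects the group law, for every $x \in E$ one has
$$
(\rho \circ \sigma_p)(x) = g(x) + g(p) + t, \qquad (\sigma_{p'} \circ \rho)(x) = g(x) + t + p',
$$
so $\rho \circ \sigma_p = \sigma_{p'} \circ \rho$ on $E$ exactly when $g(p) = p'$. Combining this with the previous paragraph gives both implications: an isomorphism of the two algebras produces such a $\rho$, and its origin-fixing part $g$ then satisfies $g(p) = p'$; conversely any $g \in \Aut_\k(E, o_E)$ with $g(p) = p'$ yields, via $\rho_g$, a commuting square and hence an isomorphism. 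Therefore $\mathcal{A}(E, \sigma_p) \cong \mathcal{A}(E, \sigma_{p'})$ as graded algebras if and only if $g(1:1:\xi) = (1:1:\xi')$ for some $g \in \Aut_\k(E, o_E)$, i.e.\ if and only if $\tau^l(1:1:\xi) = (1:1:\xi')$ for some $l \in \mathbbm{Z}_{|\tau|}$. Equivalently, this is what \cite[Theorem 3.1]{Ma} yields upon specializing $\sigma$ to a translation by a primitive $2$-torsion point.

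The one step requiring care is the second fact above --- that an automorphism of $E$ fixing $o_E$ is induced by a projective transformation of $\mathbbm{P}(V^*)$ --- since this is precisely what lets one pass between abstract automorphisms of the elliptic curve and automorphisms of the ambient projective plane; once it is in hand, the argument reduces to the one-line identity $g\,\sigma_p\,g^{-1} = \sigma_{g(p)}$ together with the cyclicity of $\Aut_\k(E, o_E)$.
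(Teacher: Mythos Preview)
Your argument is correct. The paper does not give a proof of this lemma at all: it simply records it as a special case of \cite[Theorem 3.1]{Ma}. What you have written is a self-contained reconstruction of that special case, using Lemma \ref{eett} together with the decomposition $\Aut_\k E = \{\sigma_t : t \in E\} \rtimes \Aut_\k(E,o_E)$ and the cyclicity of $\Aut_\k(E,o_E)$ in characteristic zero. The conjugation identity $g\,\sigma_p\,g^{-1} = \sigma_{g(p)}$ is exactly the point, and your justification that any $g \in \Aut_\k(E,o_E)$ extends to $\mathbbm{P}(V^*)$ via the complete linear system $|3o_E|$ (using that $o_E$ is a flex) is the right one; the paper itself invokes the analogous extension fact for translations in the proof of Lemma \ref{jjll}. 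So your route is not different in spirit---it is precisely the content behind the citation---but it has the advantage of making the argument visible rather than deferring to \cite{Ma}.
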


\section{Symmetric Regular Superpotentials} \label{srs}

Let $V$ be an  $n$-dimensional vector space with a basis $\{x_1, \cdots, x_n \}$. Let $\sym(m)$ be the symmetric group of degree $m>0$, then there is a group action
$$
\sym(m) \times V^{\ot m} \to V^{\ot m}, \ (\sigma, x_{i_1} \ot \cdots \ot x_{i_m}) \mapsto x_{\sigma(i_1)} \ot \cdots \ot x_{\sigma(i_m)}.
$$
We will call $\sym^m(V) = \{ v \in V^{\ot m} \mid \sigma \cdot v = v \ \text{for all}\ \sigma \in \sym(m) \}$ the space of symmetric tensors of order $n$.

\begin{lem}\label{lem1}
Let $B = \bigwedge V$ be an exterior algebra, and let $W \subset V \ot V$ be the subspace such that $B = T(V) / (W)$, which is to say $W$ is spanned by $\{x_i x_j + x_j x_i \mid 1 \leq i,j \leq n \}$. Then $V \ot W \cap W \ot V = \sym^3(V)$.
\end{lem}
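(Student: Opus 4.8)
The plan is to identify $W$ explicitly with the symmetric square of $V$ and then deduce the claim from the standard description of $V\ot \sym^2(V)$ and $\sym^2(V)\ot V$ as invariant subspaces for the $\sym(3)$-action. First I would note that the spanning set $\{x_ix_j+x_jx_i \mid 1\le i,j\le n\}$ of $W$ consists of $x_i\ot x_j + x_j \ot x_i$ for $i\neq j$ together with $2\,x_i\ot x_i$ for $i=j$; since $\operatorname{char}\k = 0$ the latter is a nonzero multiple of $x_i \ot x_i$, so these vectors span exactly $\sym^2(V)$. Hence $W = \sym^2(V)$ and, inside $V^{\ot 3}$, we have $V\ot W = V \ot \sym^2(V)$ and $W\ot V = \sym^2(V)\ot V$.

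Next I would record, writing $s_1=(12)$ and $s_2=(23)$ for the two adjacent transpositions in $\sym(3)$, that $V\ot\sym^2(V)$ is precisely the subspace of $V^{\ot 3}$ fixed by $s_2$ (symmetry in the last two tensor slots) and that $\sym^2(V)\ot V$ is precisely the subspace fixed by $s_1$. This is immediate on expanding a general tensor against a basis of the first (resp. last) tensor factor: being $s_2$-fixed (resp. $s_1$-fixed) is equivalent to every coefficient tensor in $V^{\ot 2}$ being symmetric.

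The two inclusions then follow at once. An element of $\sym^3(V)$ is fixed by all of $\sym(3)$, hence in particular by $s_1$ and $s_2$, so it lies in $\sym^2(V)\ot V$ and in $V\ot\sym^2(V)$; this gives $\sym^3(V)\subseteq V\ot W\cap W\ot V$. Conversely, if $t\in V\ot W\cap W\ot V$, then $t$ is fixed by both $s_1$ and $s_2$; since $s_1$ and $s_2$ generate $\sym(3)$, $t$ is fixed by the whole group and therefore $t\in\sym^3(V)$.

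I do not anticipate any real difficulty: the proof is short once $W$ is identified with $\sym^2(V)$. The only two places that genuinely use hypotheses are the appeal to $\operatorname{char}\k = 0$ (to ensure $x_i\ot x_i\in W$, so that $W$ is the \emph{full} symmetric square and not a proper subspace) and the elementary fact that the adjacent transpositions $(12)$ and $(23)$ generate $\sym(3)$, which is what powers the final step. A more computational alternative would be to compare dimensions of $V\ot W$, $W\ot V$, $V\ot W + W\ot V$ and $\sym^3(V)$ directly, but the invariant-subspace argument above is cleaner and basis-free.
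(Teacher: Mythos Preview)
Your proof is correct and follows essentially the same route as the paper: both arguments show that elements of $W\ot V$ are fixed by $(12)$, elements of $V\ot W$ are fixed by $(23)$, and then use that these transpositions generate $\sym(3)$. Your only cosmetic addition is the preliminary identification $W=\sym^2(V)$, which makes the ``fixed by $(12)$'' and ``fixed by $(23)$'' statements cleaner, but the logical skeleton is identical to the paper's computation.
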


\begin{proof}
It is straightforward to check that $\sym^3(V) \subset V \ot W \cap W \ot V$. Let $w \in V \ot W \cap W \ot V$. Since $w \in W \ot V$,
$$
(12) \cdot w = (12) \cdot \left(\sum_{i,j} a_{ij}(x_i x_j + x_j x_i ) \ot v_{ij} \right) = \sum_{i,j} a_{ij}(x_j x_i + x_i x_j ) \ot v_{ij} = w
$$ 
where $a_{ij} \in \k,v_{ij}\in V$. Similarly, since $w \in V \ot W$, we have that $(23) \cdot w = w$. Since $\sym(3)$ is generated by $\{ (12),(23) \}$, $w \in \sym^3(V)$.
\end{proof}

\begin{lem}\label{lem2}
Let $B = T(V)/(R)$ be a quadratic algebra. If $B^!$ is commutative, then 
$$
V \ot R \cap R \ot V \subset \sym^3 (V).
$$
\end{lem}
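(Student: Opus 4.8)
The plan is to reduce the statement to Lemma \ref{lem1} via a standard quadratic-duality argument. The key observation is that $B^!$ being commutative is a condition on the relation space $R^\perp \subset V^* \ot V^*$: commutativity of $B^! = T(V^*)/(R^\perp)$ means precisely that every commutator $x_i^* x_j^* - x_j^* x_i^*$ lies in $R^\perp$, i.e. the span $W^*$ of $\{x_i^* x_j^* - x_j^* x_i^* \mid 1 \le i,j \le n\}$ is contained in $R^\perp$. Equivalently, taking orthogonal complements inside $V \ot V$, we get $R \subseteq (W^*)^\perp$. Now $(W^*)^\perp$ is exactly the span of $\{x_i x_j + x_j x_i\}$ — the relation space $W$ of the exterior algebra $\bigwedge V$ appearing in Lemma \ref{lem1} — because the pairing between $V\ot V$ and $V^*\ot V^*$ identifies the annihilator of the "antisymmetric" part with the "symmetric" part (in the basis $\{x_i x_j\}$, up to the standard sign bookkeeping). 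So $R \subseteq W$, with $W = \operatorname{span}\{x_ix_j + x_jx_i\}$.

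Next I would use monotonicity of the operation $Q \mapsto V\ot Q \cap Q \ot V$: if $R \subseteq W$ then $V \ot R \subseteq V \ot W$ and $R \ot V \subseteq W \ot V$, hence
$$
V \ot R \cap R \ot V \ \subseteq\ V \ot W \cap W \ot V.
$$
By Lemma \ref{lem1}, the right-hand side equals $\sym^3(V)$, which gives exactly the claimed inclusion $V \ot R \cap R \ot V \subseteq \sym^3(V)$.

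The only genuinely delicate point is the duality bookkeeping in the first paragraph: one must be careful that commutativity of $B^!$ translates to $R^\perp \supseteq W^*$ rather than to some variant, and that $(W^*)^\perp = W$ under the chosen identification of $(V\ot V)^*$ with $V^*\ot V^*$. This is where I expect the main (though still routine) obstacle to lie — it amounts to checking that the natural pairing $\langle x_i\ot x_j,\, x_k^*\ot x_l^*\rangle = \delta_{ik}\delta_{jl}$ sends the antisymmetric tensors to the orthogonal complement of the symmetric tensors and vice versa, so that "$B^!$ commutative" $\iff$ "$R$ consists of symmetric quadratic relations in the weak sense $R \subseteq \operatorname{span}\{x_ix_j+x_jx_i\}$". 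Once that identification is pinned down, the rest is immediate from Lemma \ref{lem1} and the monotonicity remark, with no further computation.
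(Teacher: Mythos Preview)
Your proposal is correct and follows essentially the same route as the paper: the paper also observes that $B^!$ commutative means $B^!$ is a quotient of the polynomial algebra $T(V^*)/(W^\perp)$, i.e.\ $W^\perp \subset R^\perp$ (your $W^*$ is exactly what the paper calls $W^\perp$), hence $R \subset W$, and then applies Lemma~\ref{lem1} together with the same monotonicity step. The only difference is presentational---the paper does not linger on the duality bookkeeping you flag as ``delicate,'' simply identifying the relation space of the polynomial algebra with $W^\perp$ directly.
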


\begin{proof}
Let $W$ be the subspace spanned by $\{x_i x_j + x_j x_i \mid 1 \leq i,j, \leq n \}$ . By Lemma \ref{lem1}, we have
$$
V \ot W \cap W \ot V = \sym^3(V).
$$ 
The fact that $B^! = T(V^*)/(R^\perp)$ is commutative implies that $B^!$ is a quotient algebra of the polynomial algebra $T(V^*)/(W^\perp)$. It is equivalent to say that $W^\perp \subset R^\perp$, so $R \subset W$. Thus 
$$
V \ot R \cap R \ot V \subset V \ot W \cap W \ot V = \sym^3(V).
$$
\end{proof}

Let $V$ be a vector space spanned by $\{x_1,x_2,x_3 \}$. Let $\omega \in V^{\ot 3}$. We will call $\omega$ a {\it potential}. There are unique $\omega_i \in V^{\ot 2}$ such that $\omega = \sum_{i = 1}^3 x_i \ot \omega_i$. Define $\partial_{x_i} \omega := \omega_i$ for each $x_i$. The {\it derivation-quotient} algebra of $\omega$ is defined by 
$$
\mathcal{D}(\omega) : = T(V) / (\partial_{x_1} \omega, \partial_{x_2} \omega, \partial_{x_3} \omega).
$$ 
Define a $\k$-linear map 
$$
\varphi: V^{\ot 3} \to V^{\ot 3}, \ x_{i_1} \ot x_{i_2} \ot x_{i_3} \mapsto x_{i_2} \ot x_{i_3}  \ot x_{i_1}.
$$
Following the definitions in \cite{MoSm1}, if $\varphi(\omega) = \omega$, then $\omega$ is called a {\it superpotential}; if there exists $\sigma \in \GL(V)$ such that 
$$
(\sigma \ot \id \ot \id)\varphi(\omega) = \omega,
$$
then $\omega$ is called a {\it twisted superpotential}. By \cite{D,BoScW,MoSm1}, every $3$-dimensional quantum polynomial algebra $S = T(V)/(R)$ is isomorphic to a derivation-quotient algebra $\mathcal{D}(\omega)$ for some unique (up to scalar) twisted superpotential $\omega \in V^{\ot 3}$.

\begin{defn} {\rm 
For a potential $\omega \in V^{\ot 3}$,
\begin{itemize}
\item[(1)] $\omega$ is called {\it regular} if $\mathcal{D}(\omega)$ is a quantum polynomial algebra;
\item[(2)] $\omega$ is called {\it symmetric} if $\omega \in \sym^3(V)$.
\end{itemize}
}
\end{defn}

For the rest, $S = T(V)/(R) = \mathcal{D}(\omega)$ is a $3$-dimensional quantum polynomial algebra where $V$ is a vector space spanned by $\{x,y,z\}$, and $\omega \in V^{\ot 3}$ is a regular twisted superpotential. 

\begin{prop} \label{sp-prop1}
Let $A = S/(f)$ be a noncommutative conic where $f \in S_2$ is a central regular element. If $A^!$ is commutative, then $\omega$ is symmetric.
\end{prop}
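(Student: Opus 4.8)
The plan is to show that $\omega\in V\ot R\cap R\ot V$, to enlarge $R$ to the relation space of $A$, and then to invoke Lemma \ref{lem2}.

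First I would recall that for the $3$-dimensional quantum polynomial algebra $S=T(V)/(R)=\mathcal{D}(\omega)$ the intersection $V\ot R\cap R\ot V$ is one-dimensional and spanned by the twisted superpotential $\omega$ (see \cite{D}, and \cite{BoScW,MoSm1}); equivalently, $S$ being Koszul of global dimension $3$ gives $\dim_\k (V\ot R\cap R\ot V)=\dim_\k (S^!_3)^*=1$, and $\omega$ visibly lies in it since $\omega=\sum_i x_i\ot\partial_{x_i}\omega\in V\ot R$ while applying the cyclic map $\varphi$ and the twisted superpotential identity $(\sigma\ot\id\ot\id)\varphi(\omega)=\omega$ puts $\omega$ in $R\ot V$ as well. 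The upshot I need is simply $\omega\in V\ot R\cap R\ot V$.

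Next I would pass from $S$ to $A=S/(f)$. By Proposition \ref{nqh-prop1}(1) the conic $A$ is Koszul, hence quadratic, so $A=T(V)/(R')$ for a subspace $R'\subseteq V\ot V$; lifting $f\in S_2$ to $\tilde f\in V\ot V$ one has $R'=R\oplus\k\tilde f$ (with $\tilde f\notin R$ because $f$ is regular, hence nonzero, in $S_2$), and in particular $R\subseteq R'$. Then $V\ot R\subseteq V\ot R'$ and $R\ot V\subseteq R'\ot V$, whence
$$\omega\ \in\ V\ot R\cap R\ot V\ \subseteq\ V\ot R'\cap R'\ot V .$$
Finally, since $A^!$ is commutative, Lemma \ref{lem2} applied to the quadratic algebra $A=T(V)/(R')$ gives $V\ot R'\cap R'\ot V\subseteq\sym^3(V)$; together with the line above this yields $\omega\in\sym^3(V)$, i.e. $\omega$ is symmetric.

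The one place where something more than a formal inclusion is used is the fact that $\omega$ actually spans $V\ot R\cap R\ot V$, in particular that $\omega\in R\ot V$, which rests on the twisted-superpotential structure of $S$ (concretely, that the left and right derivation quotients of $\omega$ agree, so $(\sigma\ot\id)(R)=R$) rather than on mere dimension counting. I expect that to be the only point requiring care; the passage to $A$ and the final appeal to Lemma \ref{lem2} are immediate.
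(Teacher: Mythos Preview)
Your argument is correct and rests on the same two ingredients as the paper's proof: the fact that $\omega\in V\ot R\cap R\ot V$ (which the paper simply cites from \cite[Proposition~2.12]{MoSm2}) and Lemma~\ref{lem2}. The only difference is where Lemma~\ref{lem2} is invoked. You apply it to $A=T(V)/(R')$ and then use the inclusion $R\subseteq R'$ to conclude $\omega\in V\ot R'\cap R'\ot V\subseteq\sym^3(V)$. The paper instead observes, via Proposition~\ref{nqh-prop1}(2), that $S^!=A^!/(f^!)$ is a \emph{quotient} of the commutative algebra $A^!$, hence $S^!$ is itself commutative; Lemma~\ref{lem2} can then be applied directly to $S=T(V)/(R)$ without ever introducing $R'$. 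These are dual formulations of the same step (the inclusion $R\subseteq R'$ is exactly what produces the surjection $A^!\twoheadrightarrow S^!$), so the arguments are equivalent; the paper's version is marginally shorter and avoids the need to describe $R'$ or to check that $A$ is quadratic.
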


\begin{proof} 
Assume $A^!$ is commutative. Since $A = S/(f)$, there is a central regular element $f^! \in A^!_2$ such that $S^! = A^!/(f^!)$ (Proposition \ref{nqh-prop1}). It implies that $S^!$ is commutative. By Lemma \ref{lem2}, $V \ot R \cap R \ot V \subset \sym^3(V)$. Moreover, by \cite[Proposition 2.12]{MoSm2}, $\omega \in V \ot R \cap R \ot V$.
\end{proof}

For symmetric regular twisted superpotentials, we have the following lemma. 

\begin{lem} \label{lem-pre-type}
There are $4$ types of symmetric regular twisted superpotentials $\omega \in V^{\ot 3}$ up to linear change of variables. They are
\begin{equation*}
\begin{aligned}
{\rm Type \ S} &: xyz + yzx + zxy + xzy + zyx + yxz, \\
{\rm Type \ S'} &: xyz + yzx + zxy + xzy + zyx + yxz + x^3, \\
{\rm Type \ NC} &: xyz + yzx + zxy + xzy + zyx + yxz + x^3 +y^3, \\
{\rm Type \ EC}  &: xyz + yzx + zxy + xzy + zyx + yxz + \xi (x^3 + y^3 + z^3)
\end{aligned}
\end{equation*}
where $\xi \in \k^\times$ and $\xi \neq -2, 1 \pm \sqrt{-3},$ and $\xi^3 \neq 1$. Moreover, the above potentials are all symmetric regular superpotentials. 
\end{lem}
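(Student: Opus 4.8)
The plan is to combine three ingredients: the complete list of regular twisted superpotentials in three variables due to Itaba and Matsuno \cite{IMa,Ma}; the fact that symmetry of a tensor is invariant under linear change of variables; and the dictionary between symmetric potentials and ternary cubic forms. For the second point, a linear change of variables acts on $V^{\ot 3}$ through the diagonal $\GL(V)$-action, which commutes with the $\sym(3)$-action permuting tensor factors; hence $\sym^3(V)$ is a $\GL(V)$-subrepresentation of $V^{\ot 3}$, the $\GL(V)$-orbit of a symmetric tensor consists entirely of symmetric tensors, and so it suffices to run through the Itaba--Matsuno normal forms \emph{as written} and keep exactly those that already lie in $\sym^3(V)$. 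For the third point, the symmetrization map is a $\GL(V)$-equivariant isomorphism $\k[x,y,z]_3 \xrightarrow{\sim} \sym^3(V)$, so symmetric potentials up to linear change of variables correspond to ternary cubic forms up to projective equivalence, and under this dictionary $\mathcal{D}(\omega)$ is the derivation-quotient algebra attached to the cubic; this is what will make the bookkeeping transparent and pin down the parameter occurring in Type EC.

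Carrying out the extraction is the heart of the matter. For the generic (type A) family of superpotentials, which up to scalar has the shape $a(xyz+yzx+zxy)+b(xzy+zyx+yxz)+c(x^3+y^3+z^3)$, being a symmetric tensor forces $a=b$; rescaling yields the one-parameter family $xyz+yzx+zxy+xzy+zyx+yxz+\xi(x^3+y^3+z^3)$, whose point scheme one computes to be the Hesse cubic $\mathcal{V}\bigl(x^3+y^3+z^3-3\lambda(\xi)xyz\bigr)$ with $\lambda(\xi)$ as in \eqref{eq-pre-ga}. The regularity constraints recorded by Itaba--Matsuno thus amount to smoothness of this curve, i.e.\ $\lambda(\xi)^3\neq 1$, which by the identity $\lambda(\xi)^3=1\iff\xi^3\in\{1,-8\}$ is precisely $\xi\in\k^\times$, $\xi^3\neq 1$ and $\xi\neq -2,\,1\pm\sqrt{-3}$ (Type EC), while the value $\xi=0$ degenerates to the $(-1)$-skew polynomial algebra (Type S). Among the remaining, properly degenerate entries of the list, a direct inspection shows that the only normal forms lying in $\sym^3(V)$ are $xyz+yzx+zxy+xzy+zyx+yxz+x^3$ (point scheme a smooth conic together with a secant line, Type S$'$) and $xyz+yzx+zxy+xzy+zyx+yxz+x^3+y^3$ (point scheme a nodal cubic, Type NC). In cubic-form terms this says that the cubics actually realised are the smooth cubic, the triangle $xyz$, the conic-plus-secant $x(x^2+yz)$, and the nodal cubic $x^3+y^3+xyz$, whereas the cuspidal cubic, the conic-plus-tangent, three concurrent lines, a line plus a double line, and a triple line do not occur: either a generator of $\mathcal{D}(\omega)$ would square to zero, contradicting the fact that a $3$-dimensional quantum polynomial algebra is a domain \cite{ATV}, or a variable is missing so that the relation space has dimension $<3$ and the Hilbert series is wrong.

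It remains to see that the four items are genuinely inequivalent and that all of them are honest superpotentials. Their point schemes — a triangle of lines for Type S, a conic plus a secant line for Type S$'$, a nodal cubic for Type NC, a smooth elliptic curve for Type EC, with the Hesse parameter $\lambda(\xi)$ realising every smooth ternary cubic as $\xi$ ranges over its admissible values — are pairwise non-isomorphic as plane cubic divisors, so by Lemma \ref{eett} the associated algebras are non-isomorphic. Finally, since $\varphi$ acts on $V^{\ot 3}$ by the $3$-cycle on tensor slots and every element of $\sym^3(V)$ is fixed by all of $\sym(3)$, each of these symmetric potentials satisfies $\varphi(\omega)=\omega$ and is therefore a superpotential, not merely a twisted one. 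The step I expect to be the main obstacle is the middle paragraph: one must carefully match the inequalities cutting out Type EC with the exact regularity locus in the Itaba--Matsuno list — for which the computation $\lambda(\xi)^3=1\iff\xi^3\in\{1,-8\}$ and the check that every admissible $\xi$ does correspond to a smooth curve are the crucial points — and one must confirm, using the $\GL(V)$-invariance of symmetry established above, that no further degenerate normal form becomes symmetric after a change of variables.
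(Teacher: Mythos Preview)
Your approach is correct and matches the paper's: both reduce to extracting the symmetric members of the Itaba--Matsuno list of regular twisted superpotentials. The paper's own proof is a single sentence deferring entirely to that list, whereas you additionally justify why checking the listed normal forms suffices (via $\GL(V)$-stability of $\sym^3(V)$), carry out the parameter analysis pinning down the Type~EC locus through $\lambda(\xi)^3\neq 1$, verify pairwise inequivalence via the point schemes, and observe that symmetric tensors are automatically fixed by $\varphi$ and hence genuine superpotentials---all of which the paper leaves implicit.
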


\begin{proof}
Since a complete list of regular twisted superpotentials is given in \cite[Porposition 3.1]{IMa} and \cite[Theorem 4.2]{Ma}, we only need to find the symmetric potentials among them. 
\end{proof}

We write $\omega_S,\omega_{S'},\omega_{N},\omega_{E(\xi)}$ for the superpotentials of Type S, Type S$'$, Type NC, Type EC respectively. Their derivation-quotient algebras are
\begin{equation} \label{ddn}
\begin{aligned}
\mathcal{S}: &= \mathcal{D}(\omega_S) = \k \langle x,y,z \rangle / (xy + yx, yz + zy, zx + xz), \\
\mathcal{S'}: &= \mathcal{D}(\omega_{S'}) = \k \langle x,y,z \rangle / (xy + yx, yz + zy + x^2, zx + xz),\\
\mathcal{N}: &= \mathcal{D}(\omega_{N}) = \k \langle x,y,z \rangle / (xy + yx, yz + zy + x^2, zx + xz + y^2), \\
\mathcal{E_\xi}: &= \mathcal{D}(\omega_{E(\xi)}) = \k \langle x,y,z \rangle / (xy + yx + \xi z^2, yz + zy + \xi x^2, zx + xz + \xi y^2)
\end{aligned}
\end{equation}
where $\xi \in \k^\times$ and $\xi \neq -2, 1 \pm \sqrt{-3},$ and $\xi^3 \neq 1$.

For the rest, we will simply write $\mathcal{E}$ for $\mathcal{E}_{\xi}$ if the choice of $\xi$ does not matter. Let $Z(S)_2$ be the set of central elements in $S$ with degree $2$.   

\begin{lem} \label{sp-lem1}
Let $S$ be an algebra in Equation \ref{ddn}, then $Z(S)_2 = \{ ax^2 + b y^2 + cz^2 \mid a,b,c \in \k \}$.
\end{lem}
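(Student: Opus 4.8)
The plan is to exploit the numerics: from $H_S(t) = (1-t)^{-3}$ we have $\dim_\k S_2 = 6$ and $\dim_\k S_3 = 10$, and a degree-two element $g$ is central precisely when $[x,g] = [y,g] = [z,g] = 0$ in $S_3$. So I would first fix, in each of the four cases, a monomial $\k$-basis of $S_2$ and of $S_3$. For $S_2$ one checks that $\{x^2,y^2,z^2,xy,yz,zx\}$ is a basis: the three defining relations let one rewrite one of $yx,zy,xz$ (respectively $yx,zy,zx$ for Type S) in terms of these six monomials, so they span, and there are exactly $\dim_\k S_2$ of them. For $S_3$, in Types S, S$'$, NC one may take the Poincar\'e--Birkhoff--Witt basis $\{x^ay^bz^c : a+b+c=3\}$, the given presentations being confluent for the degree-lexicographic order with $x<y<z$; for Type EC one instead reduces the twelve obvious normal-form monomials by the two overlap relations among $xyz,yzx,zxy$ to obtain an explicit basis of the ten-dimensional space $S_3$.

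The first step is to show $x^2,y^2,z^2 \in Z(S)_2$. For Types S, S$'$, NC this is a short direct verification from the relations (for instance, for Type S, $yx^2 = (yx)x = -x(yx) = x^2y$, and similarly for the other six commutators). For Type EC I would instead use the cyclic symmetry $x\mapsto y\mapsto z\mapsto x$, which is a graded automorphism: a one-line computation from $yx = -xy-\xi z^2$ gives $[y,x^2] = \xi\,[x,z^2]$, and applying the automorphism, $[x,z^2] = \xi\,[z,y^2]$ and $[z,y^2] = \xi\,[y,x^2]$; multiplying these yields $[y,x^2] = \xi^3\,[y,x^2]$, hence $[y,x^2]=0$ since $\xi^3 \neq 1$, and likewise all of $[x,z^2],[z,y^2]$, and the other cyclic triple $[z,x^2],[x,y^2],[y,z^2]$, vanish. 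Thus the span of $x^2,y^2,z^2$ lies in $Z(S)_2$, and since these three are linearly independent in $S_2$ this span is exactly $3$-dimensional.

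The second step is to show there is nothing more. Because $x^2,y^2,z^2$ are central, a general element $g = ax^2+by^2+cz^2+d\,xy+e\,yz+f\,zx$ is central if and only if $h := d\,xy+e\,yz+f\,zx$ is, so it suffices to prove that $h$ central forces $d=e=f=0$. I would compute $[x,h]$ (and, for Types S and S$'$, also $[y,h]$) directly from the relations, reduce the result to the fixed basis of $S_3$, and read off coefficients. For example, for Type EC one obtains $[x,h] = 2d\,x^2y - 2f\,x^2z + \xi\big(d\,z^2x - f\,xy^2 + e\,y^3 - e\,z^3\big)$, whose six monomials are distinct basis vectors of $S_3$ (using centrality to interpret $x^2z,z^2x,xy^2$ unambiguously); since $\k$ has characteristic zero and $\xi \neq 0$, the vanishing of $[x,h]$ already forces $d=e=f=0$. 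The other three cases are handled the same way and are, if anything, simpler, occasionally needing $[y,h]$ as well to kill the last coefficient.

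The only genuinely delicate point is the bookkeeping in Type EC: one must be certain of the chosen monomial basis of $S_3$ and must carry the relations $xy+yx+\xi z^2=0$ and its cyclic images through the commutator computations without sign errors. The non-uniqueness of the "normal form" among $xyz,yzx,zxy$ is what makes the naive Diamond-Lemma reduction fail to be confluent for this algebra, but it is resolved once and for all by fixing the two overlap relations; everything else is a routine finite verification.
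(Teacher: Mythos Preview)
Your argument is correct. For Types S, S$'$, NC it is essentially the same as the paper's: verify directly that $x^2,y^2,z^2$ are central, fix the PBW basis $\{x^iy^jz^k\}$ of $S_3$, and compare $xh$ with $hx$ (the paper writes out both products rather than the commutator, but the content is identical).

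For Type EC, however, your route differs from the paper's in both steps. For centrality of $x^2,y^2,z^2$ you use the cyclic automorphism $x\mapsto y\mapsto z\mapsto x$ together with $\xi^3\neq 1$ to collapse the chain $[y,x^2]=\xi[x,z^2]=\xi^2[z,y^2]=\xi^3[y,x^2]$; the paper instead computes the Buchberger overlaps $(z^2)y=z(zy)$ and $(z^2)x=z(zx)$ under three different term orders to extract $x^2y=yx^2$, $xy^2=y^2x$, etc. Your argument is shorter and more conceptual. For the ``nothing more'' step the contrast is sharper: you stay inside $\mathcal{E}$, compute $[x,h]$, and read off $d=e=f=0$ from the linear independence of $x^2y,x^2z,z^2x,xy^2,y^3,z^3$ in $\mathcal{E}_3$; the paper passes to the Clifford deformation $\mathcal{E}^!(\theta_f)$ of the dual, resolves overlaps there to produce linear elements $\beta x-\alpha z$ and $\beta x+\gamma y$ in the defining ideal, and invokes Lemma~\ref{J} ($J\cap F_1T(V^*)=0$) to force $\alpha=\beta=\gamma=0$. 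Your approach is more elementary and self-contained but requires you to actually pin down a basis of $\mathcal{E}_3$ (which you acknowledge as the delicate point); the paper's approach sidesteps that bookkeeping at the cost of invoking the Clifford-deformation machinery, which it needs later anyway.
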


\begin{proof}
We will give calculation for $\mathcal{N}$ and $\mathcal{E}$. The other cases are similar to $\mathcal{N}$.

(1) For $\mathcal{N} = \k \langle x,y,z \rangle / (xy + yx, yz + zy + x^2, zx + xz + y^2)$. First we claim that $x^2, y^2, z^2 \in Z(\mathcal{N})_2$. For $x^2$, we have
\begin{equation*} 
\begin{aligned}
x^2 y &= x(xy) = -x(yx) = - (xy)x = yx^2, \\
x^2 z &= x(xz) = -x(zx + y^2) = -(xz)x - xy^2 = -(-zx-y^2)x -xy^2 \\ 
&= zx^2 + (y^2x - xy^2) = zx^2.
\end{aligned}
\end{equation*}
Similarly, we have 
\begin{equation*} 
\begin{aligned}
y^2 x = xy^2, y^2 z = zy^2, z^2 x = xz^2, z^2 y = yz^2.
\end{aligned}
\end{equation*}
Thus $x^2,y^2,z^2 \in Z(\mathcal{N})_2$.

It suffices to show that if $\alpha xy + \beta yz + \gamma zx \in Z(\mathcal{N})_2$ where $\alpha,\beta, \gamma \in \k$, then $\alpha = \beta = \gamma = 0$. By the order $z > y > x$,  
$$
\{yx + xy, zy + yz + x^2, zx + xz + y^2 \}
$$ 
is a Gr\"obner basis for $\mathcal{N}$ \cite[Definition 1.4]{Le}. It implies that $\{x^iy^jz^k\mid i,j,k \geq 0 \}$ is a PBW basis for $\mathcal{N}$ (\cite[Theorem 2.3]{Le}). Thus $\{x^iy^jz^k\mid i+j+k =3 \}$ is a $\k$-basis for $\mathcal{N}_3$. Then we have following unique presentations  
\begin{equation*} 
\begin{aligned}
(\alpha xy + \beta yz + \gamma zx) x &= \alpha xyx + \beta yzx + \gamma zx^2 = -\alpha x^2y + \beta xyz - \beta y^3 + \gamma x^2 z, \\
x (\alpha xy + \beta yz + \gamma zx) &= \alpha x^2y + \beta xyz + \gamma xzx = \alpha x^2y + \beta xyz - \gamma x^2z - \gamma xy^2.
\end{aligned}
\end{equation*}
Thus if $(\alpha xy + \beta yz + \gamma zx) x = x (\alpha xy + \beta yz + \gamma zx)$, then $\alpha = \beta = \gamma = 0$. 

(2) For $\mathcal{E} = \k \langle x,y,z \rangle / (xy + yx + \xi z^2, yz + zy + \xi x^2, zx + xz + \xi y^2)$ where $\xi \in \k^\times$ and $\xi \neq -2, 1 \pm \sqrt{-3},$ and $\xi^3 \neq 1$. Fix an order $x > y > z$. Then we have following equations under this order
$$
z^2 = - \frac{1}{\xi} xy -  \frac{1}{\xi}yx, zy = - \xi x^2 -yz , zx = -xz- \xi y^2. 
$$
By solving overlaps $(z^2) y = z (zy), (z^2)x = z(zx)$, we can get 
\begin{equation*}
\begin{aligned}
- \frac{1}{\xi} xy^2 - \frac{1}{\xi}yxy &= (z^2)y = z(zy) = - \xi^2 xy^2 - \frac{1}{\xi} yxy + (\xi^2 - \frac{1}{\xi}) y^2x , \\
- \frac{1}{\xi} xyx - \frac{1}{\xi} y x^2 &= (z^2)x = z(zx) = (\xi^2 - \frac{1}{\xi})x^2y - \frac{1}{\xi} xyx - \xi^2 yx^2.
\end{aligned}
\end{equation*}
Then we have $xy^2 = y^2 x, x^2y = yx^2$. Similarly, by fixing orders $z>x>y$ and $y>z>x$, we can get relations $xz^2 = z^2x, x^2z = zx^2$ and $yz^2 = z^2y, y^2z = zy^2$ respectively. Thus $x^2, y^2, z^2 \in Z(\mathcal{E})_2$. 

Let $\alpha,\beta, \gamma \in \k$. We will show that $\alpha xy + \beta yz + \gamma zx \in Z(\mathcal{E})_2$ implies that $\alpha = \beta = \gamma = 0$. Assume $f := \alpha xy + \beta yz + \gamma zx \in Z(\mathcal{E})_2$. Let $\theta_f$ be the Clifford map of $\mathcal{E}^!$ induced by $f$. Then we have Clifford deformation (we still use $x,y,z$ for $x^*,y^*,z^*$)
$$
\mathcal{E}^!(\theta_f) = \k \langle x,y,z \rangle / (x^2 - \xi yz, y^2 - \xi xz, z^2 - \xi xy, xy - yx + \alpha, yz - zy + \beta, xz - zx + \gamma).
$$
Let $J_2 = \{x^2 - \xi yz, y^2 - \xi xz, z^2 - \xi xy, xy - yx + \alpha, yz - zy + \beta, xz - zx + \gamma \}$, and $J$ be the ideal generated by $J_2$. Fix an order $x > y > z$. Then we have following equations under this order
\begin{equation*}
\begin{aligned}
y^2 &= \xi xz, yx = xy + \alpha, zy = yz + \beta, \\
z^2 &= \xi xy, zx = xz + \gamma.
\end{aligned}
\end{equation*}
By solving the overlaps $(y^2)y = y(y^2), (z^2)z = z(z^2)$, we get
\begin{equation*}
\begin{aligned}
\xi xyz + \beta \xi x = (y^2)y = y(y^2) = \xi xyz + \alpha \xi z, \\
\xi xyz = (z^2)z = z(z^2) = \xi xyz + \beta \xi x + \gamma \xi y.
\end{aligned}
\end{equation*}
Thus we have $\beta x - \alpha z, \beta x + \gamma y \in J$. By Lemma \ref{J}, $J \cap F_1 T(V^*)$ = 0, thus $\alpha = \beta = \gamma = 0$.
\end{proof}

Every $3$-dimensional quantum polynomial algebra is a domain (\cite{ATV}), so every nonzero element is regular. 

\begin{thm}\label{thm-srs}
Let $S = \mathcal{D}(\omega)$ be a $3$-dimensional quantum polynomial algebra where $\omega$ is a regular twisted superpotential, and $f \in S_2$ a central regular element. Let $A = S / (f)$ be the noncommutative conic. Then $A^!$ is commutative if and only if $\omega$ is a symmetric regular superpotential. 
\end{thm}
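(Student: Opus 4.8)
The plan is to prove the two implications separately, since each one is essentially an assembly of results already established.

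For the ``only if'' direction I would argue as follows. Assuming $A^!$ is commutative, Proposition \ref{sp-prop1} immediately gives that $\omega$ is symmetric, i.e. $\omega \in \sym^3(V)$. It then remains only to note that a symmetric potential is automatically a genuine (untwisted) superpotential: writing $\omega = \sum_{i,j,k} a_{ijk}\, x_i \ot x_j \ot x_k$ with the array $(a_{ijk})$ invariant under $\sym(3)$, one computes
$$
\varphi(\omega) = \sum_{i,j,k} a_{ijk}\, x_j \ot x_k \ot x_i = \sum_{p,q,r} a_{rpq}\, x_p \ot x_q \ot x_r = \omega,
$$
the last step because $a_{rpq}=a_{pqr}$. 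Since $\omega$ is regular by hypothesis, this shows $\omega$ is a symmetric regular superpotential.

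For the ``if'' direction I would proceed by reduction. Commutativity of $A^! = (S/(f))^!$ is invariant under isomorphisms of the pair $(S,f)$, so by Lemma \ref{lem-pre-type} one may assume $S$ is one of the algebras in Equation \ref{ddn}, and then by Lemma \ref{sp-lem1} that $f = a x^2 + b y^2 + c z^2$ for some $a,b,c\in\k$. By Proposition \ref{nqh-prop1} the algebra $A = S/(f)$ is Koszul, hence quadratic, $A = T(V)/(R_A)$; since the degree-two component of the ideal $(f)\subseteq S$ is exactly $\k\overline{f}$, the relation space is $R_A = R + \k\widetilde f$, where $R$ is the degree-two relation space of $S$, spanned by $\partial_x\omega,\partial_y\omega,\partial_z\omega$, and $\widetilde f = a\,x\ot x + b\,y\ot y + c\,z\ot z$ is a lift of $f$. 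Because $\omega$ is a symmetric tensor, each $\partial_{x_i}\omega$ lies in $\sym^2(V)$, and clearly $\widetilde f \in \sym^2(V)$, so $R_A \subseteq \sym^2(V)$. Finally, $\sym^2(V)$ is the orthogonal complement in $V^*\ot V^*$ of the span of the commutators $\{x_i^*\ot x_j^* - x_j^*\ot x_i^*\}$, so all these commutators lie in $R_A^\perp$, and hence $A^! = T(V^*)/(R_A^\perp)$ is commutative.

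Given the earlier lemmas this argument is short, and the real content is already packaged in Proposition \ref{sp-prop1} (forward direction) and in Lemmas \ref{lem-pre-type} and \ref{sp-lem1} (backward direction, especially the Gr\"obner-basis computation pinning down $Z(S)_2$). The only points that demand care are bookkeeping ones: first, checking that commutativity of the quadratic dual is preserved under isomorphisms of pairs $(S,f)$, so that one may legitimately pass to the normal forms of Lemma \ref{lem-pre-type}; and second, justifying the identification $R_A = R + \k\widetilde f$ of the degree-two relations of the Koszul algebra $A$, which rests on the degree-two part of the ideal $(f)$ being one-dimensional.
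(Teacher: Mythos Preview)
Your proposal is correct and follows essentially the same route as the paper: both directions ultimately rest on Proposition \ref{sp-prop1} for $(\Rightarrow)$ and on Lemmas \ref{lem-pre-type} and \ref{sp-lem1} for $(\Leftarrow)$, and your identification $R_A = R + \k\widetilde f \subseteq \sym^2(V)$ is exactly the paper's explicit check that the commutators lie in $R_A^\perp$, just phrased more invariantly. One small improvement in your version is the forward direction: rather than invoking the classification in Lemma \ref{lem-pre-type} to conclude that a symmetric regular twisted superpotential is in fact an untwisted superpotential, you observe directly that any $\omega \in \sym^3(V)$ is fixed by the cyclic shift $\varphi$ (since the cyclic permutation lies in $\sym(3)$), which is cleaner and classification-free.
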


\begin{proof} 
$(\Rightarrow)$ By Proposition \ref{sp-prop1} and Lemma \ref{lem-pre-type}.

$(\Leftarrow)$ By assumption $S = T(V)/(R)$ is an algebra in Equation \ref{ddn}, so it implies that $R$ is spanned by $\{xy + yx + c z^2,  yz + zy + a x^2, zx + xz + b y^2 \}$ where $a,b,c \in \k$. By Lemma \ref{sp-lem1}, central regular elements in $S_2$ are all have the form $\alpha x^2 + \beta y^2 + \gamma z^2$ where $\alpha,\beta,\gamma \in \k$, thus the subspace $R_A$ of $A$ such that $A = T(V)/(R_A)$ is spanned by $ \{xy + yx + c z^2,  yz + zy + a x^2, zx + xz + b y^2, \alpha x^2 + \beta y^2 + \gamma z^2\}$. It follows that $\{x^*y^* - y^*x^*, y^*z^* -z^*y^*, z^*x^* -x^*z^* \} \subset R_A^\perp$. Thus $A^!$ is commutative. 
\end{proof}

\section{Classification of Noncommutative Conics} \label{ca}

Let $(S,f)$ be a pair where $S$ is a $3$-dimensional quantum polynomial algebra and $f \in S_2$ a central regular element. We will say that two pairs $(S,f),(S',f')$ are isomorphic if there is an isomorphism of graded algebras $\varphi: S \to S'$ such that $\varphi(f) = f'$. In this section, we classify noncommutative conics up to isomorphism of pairs. There are two steps to classify pairs $(S,f)$:
\begin{itemize}
\item[(1)] Classify algebras $S$ up to isomorphism of graded algebras.
\item[(2)] For each fixed $S$, classify pairs $(S,f)$ up to graded algebra automorphism of $S$.
\end{itemize} 

For the first step, it suffices to consider the algebra $\mathcal{E}_{\xi}$ of Type EC in Equation \ref{ddn}. Let $\lambda(\xi)$ be defined as in Equation \ref{eq-pre-ga}. The algebra $\mathcal{E}_{\xi} = \mathcal{A}(E_{\lambda(\xi)}, \sigma_{(1:1:\xi)})$ is a geometric algebra of Type EC where 
$$
E_{\lambda(\xi)} = \mathcal{V}(x^3 + y^3 + z^3 -3 \lambda(\xi) xyz)
$$ 
is an elliptic curve and $\sigma_{(1:1:\xi)}$ is a translation by a primitive $2$-torsion point $(1:1:\xi) \in E$. 

Let $\mathcal{E}_{\xi} = \mathcal{A}(E_{\lambda(\xi)}, \sigma_{(1:1:\xi)}), \mathcal{E}_{\xi'} = \mathcal{A}(E_{\lambda(\xi')}, \sigma_{(1:1:\xi')})$ be two algebras as defined above. 

\begin{lem}
If $\mathcal{E}_{\xi} \cong \mathcal{E}_{\xi'}$ as graded algebras, then $j(E_{\lambda(\xi)}) = j(E_{\lambda(\xi')}).$
\end{lem}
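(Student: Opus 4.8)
The plan is to reduce the claim to the fact, recalled just above, that the $j$-invariant is a complete invariant for projective equivalence of elliptic curves in $\mathbbm{P}^2$. Recall that both $\mathcal{E}_{\xi} = \mathcal{A}(E_{\lambda(\xi)}, \sigma_{(1:1:\xi)})$ and $\mathcal{E}_{\xi'} = \mathcal{A}(E_{\lambda(\xi')}, \sigma_{(1:1:\xi')})$ are geometric algebras of Type EC, and both are quotients of the tensor algebra $T(V)$ with $V = \k x \oplus \k y \oplus \k z$; in particular they have the same degree-one part $S_1 = V = S_1'$, so Lemma \ref{eett} is applicable.

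First I would apply the forward direction of Lemma \ref{eett}: a graded algebra isomorphism $\mathcal{E}_{\xi} \cong \mathcal{E}_{\xi'}$ produces an automorphism $\tau$ of $\mathbbm{P}(V^*) \cong \mathbbm{P}^2$ that restricts to an isomorphism of $E_{\lambda(\xi)}$ onto $E_{\lambda(\xi')}$ (the diagram in Lemma \ref{eett} moreover records that $\tau$ intertwines $\sigma_{(1:1:\xi)}$ and $\sigma_{(1:1:\xi')}$, but that extra compatibility is not needed here). Since $\tau \in \Aut(\mathbbm{P}^2) = \operatorname{PGL}_3(\k)$ is a projective linear transformation carrying the plane cubic $E_{\lambda(\xi)}$ onto the plane cubic $E_{\lambda(\xi')}$, the two elliptic curves $E_{\lambda(\xi)}$ and $E_{\lambda(\xi')}$ are projectively equivalent.

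Finally, since the $j$-invariant classifies elliptic curves in $\mathbbm{P}^2$ up to projective equivalence (\cite[Theorem IV 4.1]{Ha}), projective equivalence of $E_{\lambda(\xi)}$ and $E_{\lambda(\xi')}$ forces $j(E_{\lambda(\xi)}) = j(E_{\lambda(\xi')})$, which is the assertion.

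I do not expect any serious obstacle in this direction: once the geometric presentation $\mathcal{E}_{\xi} = \mathcal{A}(E_{\lambda(\xi)}, \sigma_{(1:1:\xi)})$ is in hand, the only point requiring a moment of care is the observation that an automorphism of $\mathbbm{P}^2$ sending one curve onto another is precisely a projective equivalence, so that Hartshorne's theorem applies verbatim. The genuinely delicate analysis --- determining exactly \emph{when} $\mathcal{E}_{\xi} \cong \mathcal{E}_{\xi'}$, via Lemma \ref{lem-pre-aa} and the action of $\Aut_\k(E, o_E)$ on the primitive $2$-torsion points --- belongs to the converse statement and to the finer classification that follows.
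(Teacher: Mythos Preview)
Your proof is correct and follows exactly the paper's approach: apply Lemma \ref{eett} to obtain a projective equivalence between the two elliptic curves, then invoke the fact that the $j$-invariant is a complete invariant for projective equivalence. The paper states this in one line, while you have (helpfully) unpacked the application of Lemma \ref{eett} and noted that the commuting-diagram condition is not needed here.
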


\begin{proof}
Since $j$-invariant classifies elliptic curves up to projective equivalence, the result follows from Lemma \ref{eett}.
\end{proof}

\begin{lem} \label{jjll}
If $j(E_{\lambda(\xi)}) = j(E_{\lambda(\xi')})$, then there is an $\bar{\xi} \in \k$ satisfying $\lambda(\bar{\xi}) = \lambda({\xi'})$ such that $\mathcal{E}_{\xi} \cong \mathcal{E}_{\bar{\xi}}$ as graded algebras.
\end{lem}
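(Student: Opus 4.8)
The plan is to transport the geometric pair $(E_{\lambda(\xi)}, \sigma_{(1:1:\xi)})$ along the projective equivalence supplied by the equality of $j$-invariants, apply Lemma \ref{eett}, and then check that the automorphism produced on $E_{\lambda(\xi')}$ is again a translation by a primitive $2$-torsion point.

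First I would invoke the fact recalled above (from \cite[Theorem IV 4.1]{Ha}) that the $j$-invariant classifies elliptic curves in $\mathbbm{P}(V^*)$ up to projective equivalence: from $j(E_{\lambda(\xi)}) = j(E_{\lambda(\xi')})$ we get $\tau \in \Aut(\mathbbm{P}(V^*))$ with $\tau(E_{\lambda(\xi)}) = E_{\lambda(\xi')}$. Restricting, $\tau$ induces an isomorphism of varieties $\bar\tau \colon E_{\lambda(\xi)} \to E_{\lambda(\xi')}$, which need not preserve the chosen base points. I would then use the standard structure $\Aut_\k E = E \rtimes \Aut_\k(E,o_E)$ to factor $\bar\tau = \sigma_q \circ h$, where $q := \bar\tau(o_{E_{\lambda(\xi)}})$ and $h := \sigma_{-q} \circ \bar\tau$ sends $o$ to $o$, hence is a group isomorphism $E_{\lambda(\xi)} \to E_{\lambda(\xi')}$.

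Next I would compute the automorphism $\sigma' := \bar\tau \circ \sigma_{(1:1:\xi)} \circ \bar\tau^{-1}$ of $E_{\lambda(\xi')}$ that appears in Lemma \ref{eett}. Writing $p = (1:1:\xi)$ and using that $h$ is a group homomorphism, $h \circ \sigma_p \circ h^{-1} = \sigma_{h(p)}$; since translations of $E_{\lambda(\xi')}$ commute, the conjugating factor $\sigma_q$ cancels, so $\sigma' = \sigma_{h(p)}$. Because $p$ is a primitive $2$-torsion point of $E_{\lambda(\xi)}$ and $h$ is a group isomorphism, $h(p) \in E_{\lambda(\xi')}[2]$ with $h(p) \neq o$; by the description of primitive $2$-torsion points of a smooth Hesse cubic recalled before Lemma \ref{lem-pre-aa}, $h(p) = (1:1:\bar\xi)$ for a unique $\bar\xi \in \k$, and the defining equation of $E_{\lambda(\xi')}$ forces $\lambda(\bar\xi) = \lambda(\xi')$. (Smoothness of $E_{\lambda(\xi')}$ gives $\lambda(\xi')^3 \neq 1$, so $\bar\xi$ automatically meets the Type EC constraints and $\mathcal{E}_{\bar\xi}$ is defined.) Finally, Lemma \ref{eett} applied to $\tau$ — the hypothesis $S_1 = S'_1$ being automatic here, as every algebra in sight is generated in degree one by the same $V$ — yields $\mathcal{E}_\xi = \mathcal{A}(E_{\lambda(\xi)},\sigma_{(1:1:\xi)}) \cong \mathcal{A}(E_{\lambda(\xi')},\sigma_{h(p)}) = \mathcal{A}(E_{\lambda(\bar\xi)},\sigma_{(1:1:\bar\xi)}) = \mathcal{E}_{\bar\xi}$ as graded algebras, which is the assertion.

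The point requiring the most care is the factorization $\bar\tau = \sigma_q \circ h$ together with the resulting identity $\sigma' = \sigma_{h(p)}$: one must be certain that conjugating the $2$-torsion translation $\sigma_p$ by an \emph{arbitrary} (not necessarily base-point-preserving) variety automorphism again produces a translation by a $2$-torsion point. Once that is settled, the remainder is a short computation using the explicit group law \eqref{group-str} and the list $E[2] = \{o_E, (1:1:\xi) \mid 3\lambda\xi = 2 + \xi^3\}$.
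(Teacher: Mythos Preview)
Your argument is correct and follows the same core idea as the paper's proof: transport the translation $\sigma_{(1:1:\xi)}$ along the projective equivalence furnished by the $j$-invariant and verify that the conjugate is again translation by a primitive $2$-torsion point, then invoke Lemma~\ref{eett}.

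The execution differs slightly, and your version is a touch more economical. The paper first replaces $\tau$ by a base-point-preserving projective equivalence $\psi = \sigma_{-\tau(o_E)} \circ \tau$; to know $\psi$ still lives in $\Aut(\mathbbm{P}^2)$ it must cite that translation by a flex extends to an automorphism of $\mathbbm{P}^2$ (\cite[Lemma 2.13]{F}, \cite[Lemma 5.3]{Mo}). You instead keep the original $\tau \in \Aut(\mathbbm{P}^2)$ as the map to which Lemma~\ref{eett} is applied, and only use the factorisation $\bar\tau = \sigma_q \circ h$ abstractly to compute the conjugate $\bar\tau\,\sigma_p\,\bar\tau^{-1} = \sigma_{h(p)}$; since translations commute this goes through without needing $\sigma_q$ to extend to $\mathbbm{P}^2$. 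Both routes arrive at the same primitive $2$-torsion point (your $h(p)$ is the paper's $\psi(p)$), and the remaining identification $h(p) = (1:1:\bar\xi)$ with $\lambda(\bar\xi) = \lambda(\xi')$ is handled identically. One cosmetic point: the semidirect-product description you quote is for $\Aut_\k E$, whereas here you are factoring an isomorphism $E \to E'$ between distinct curves; the factorisation is of course still valid, but it would read more cleanly to state it directly rather than via that structural formula.
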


\begin{proof}
Let $E = E_{\lambda(\xi)}, E' = E_{\lambda(\xi')}$, and $o_E = (1:-1:0) \in E, o_{E'} = (1:-1:0) \in E'$. Consider group structures on $(E,o_E),(E', o_{E'})$ as defined in Equation \ref{group-str}. Since $j(E) = j(E')$, there is a projective equivalence $\tau: E \to E'$ which maps $o_E$ to $\tau(o_E)$. Since $o_E$ is a flex of $E$, $\tau(o_E)$ is a flex of $E'$. Let $\sigma$ be a translation by the point $-\tau(o_{E}) \in E'$. By \cite[Lemma 2.13]{F} and \cite[Lemma 5.3]{Mo}, $\sigma$ can be extended to an automorphism of $\mathbbm{P}^2$. Let $\psi : = \sigma \circ\tau$. Then $\psi: E \to E'$ is a projective equivalence which maps $o_E$ to $o_{E'}$ and thus preserves the group structures (\cite[Lemma 4.9]{Ha}). Let $p := (1:1:\xi) \in E$ and $\bar{\sigma} := \psi \circ \sigma _p \circ \psi^{-1}$. Then for $q \in E'$, $\bar{\sigma}(q) = q + \psi(p)$, and 
$$
\bar{\sigma}^2 (q) = q + 2\psi(p) = q + \psi(2p) = q.
$$
Thus $\bar{\sigma}$ is a translation by a primitive $2$-torsion point $\psi(p) = (1:1:\bar{\xi}) \in E'$.  Thus we can write $\bar{\sigma} = \sigma_{(1:1:\bar{\xi})}$. Then by Lemma \ref{eett}, $\mathcal{A}(E, \sigma_{(1:1:\xi)}) \cong \mathcal{A}(E', \sigma_{(1:1:\bar{\xi})})$ as graded algebras. 
\end{proof}

Fix a $j$-invariant, say $j$, we have a set $\{\mathcal{E}_{\xi} = \mathcal{A}(E_{\lambda(\xi)}, \sigma_{(1:1:\xi)}) \mid \ j(E_{\lambda(\xi)}) = j\}$. We classify these algebras $\mathcal{E}_{\xi}$ up to isomorphism of graded algebras. 

\begin{thm} \label{ec-gra-ale}
For a fixed $j$-invariant $j$, the number of isomorphism classes of graded algebras $\mathcal{E}_{\xi} = \mathcal{A}(E_{\lambda(\xi)}, \sigma_{(1:1:\xi)})$ with $j(E_{\lambda(\xi)}) = j$  is
\begin{itemize}
\item[(i)] $3$ when $j \neq 0, 12^3$. 
\item[(ii)] $1$ when $j = 0$. 
\item[(iii)] $2$ when $j = 12^3$.  
\end{itemize}
\end{thm}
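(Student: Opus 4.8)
The plan is to reduce the statement to a count of orbits of $\Aut_\k(E,o_E)$ acting on the three primitive $2$-torsion points of a single elliptic curve $E$ with the given $j$-invariant, and then to resolve that orbit count case by case using the classification of automorphism groups of elliptic curves.

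First I would fix $j$ and choose $\lambda_0\in\k$ with $\lambda_0^3\neq1$ and $j(E_{\lambda_0})=j$, which is possible since every elliptic curve has a Hesse form and $j$ classifies elliptic curves up to projective equivalence. Set $E:=E_{\lambda_0}$. Its three primitive $2$-torsion points are $(1:1:\xi_1),(1:1:\xi_2),(1:1:\xi_3)$, where $\xi_1,\xi_2,\xi_3$ are the three distinct roots of $\xi^3-3\lambda_0\xi+2=0$; each $\xi_i$ satisfies $\lambda(\xi_i)=\lambda_0$ and, by the discussion of $2$-torsion points in Section~\ref{sec-pre}, the admissibility conditions $\xi_i\neq-2,1\pm\sqrt{-3}$ and $\xi_i^3\neq1$, so $\mathcal{A}(E,\sigma_{(1:1:\xi_i)})=\mathcal{E}_{\xi_i}$. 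Now Lemma~\ref{jjll}, applied with $\xi'=\xi_1$, shows that every $\mathcal{E}_\xi$ with $j(E_{\lambda(\xi)})=j$ is isomorphic as a graded algebra to some $\mathcal{E}_{\bar\xi}$ with $\lambda(\bar\xi)=\lambda_0$, hence to one of $\mathcal{E}_{\xi_1},\mathcal{E}_{\xi_2},\mathcal{E}_{\xi_3}$; and Lemma~\ref{lem-pre-aa} shows $\mathcal{E}_{\xi_i}\cong\mathcal{E}_{\xi_k}$ precisely when $(1:1:\xi_i)$ and $(1:1:\xi_k)$ lie in one orbit of $\Aut_\k(E,o_E)$. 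So the number in the theorem equals the number of $\Aut_\k(E,o_E)$-orbits on the three-element set $E[2]\setminus\{o_E\}$, which depends only on the isomorphism class of $E$, hence only on $j$.

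Next I would recall that $\Aut_\k(E,o_E)=\langle\tau\rangle$ is cyclic, of order $2$ when $j\neq0,12^3$, of order $4$ when $j=12^3$, and of order $6$ when $j=0$ (see e.g.\ \cite[Ch.~IV]{Ha}). Since $-p=p$ for $p\in E[2]$, the element $[-1]=\tau^{|\tau|/2}$ fixes every point of $E[2]$, so the action of $\Aut_\k(E,o_E)$ on $E[2]\setminus\{o_E\}$ factors through a cyclic group of order $1$, $2$, or $3$ respectively. When $j\neq0,12^3$ that group is trivial and there are three orbits, which is (i). When $j=12^3$ the group has order $2$, and an involution of a three-element set is either the identity or a transposition; the identity case is excluded --- for instance, computing on the Weierstrass model $y^2z=x^3+xz^2$ (permissible since the orbit count is an invariant of the curve), the order-$4$ automorphism $(x:y:z)\mapsto(-x:\sqrt{-1}\,y:z)$ fixes the $2$-torsion point $(0:0:1)$ and interchanges $(\sqrt{-1}:0:1)$ with $(-\sqrt{-1}:0:1)$ --- so there are two orbits, which is (iii). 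When $j=0$ the group has order $3$, and an order-$3$ permutation of a three-element set is either the identity or a $3$-cycle; the identity case is again excluded --- for instance on $y^2z=x^3+z^3$, where a generator of $\Aut_\k(E,o_E)$ has square $(x:y:z)\mapsto(\zeta x:y:z)$ with $\zeta$ a primitive cube root of unity, cyclically permuting the $2$-torsion points $(-1:0:1),(-\zeta:0:1),(-\zeta^2:0:1)$ --- so there is one orbit, which is (ii).

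The one step that is not purely formal is this last case analysis: showing that the cyclic quotient acts with the stated cycle type, and in particular nontrivially, when $j\in\{0,12^3\}$. Apart from the explicit-model verification above, this can also be obtained intrinsically: $\tau$ is a root of unity in $\End(E)$, namely $\sqrt{-1}$ for $j=12^3$ and a primitive sixth root of unity for $j=0$, and $E[2]$ is a faithful module over $\End(E)/2\End(E)$ on which $\tau$ acts by multiplication by its reduction modulo $2$; since $2$ ramifies in $\mathbbm{Z}[\sqrt{-1}]$ this reduction is a transvection of $E[2]\cong(\mathbbm{Z}/2\mathbbm{Z})^2$, and since $2$ is inert in $\mathbbm{Z}[\zeta_6]$ it gives a fixed-point-free action of order $3$ on the three nonzero elements of $\mathbbm{F}_4$. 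I would present whichever of these two arguments fits the surrounding exposition.
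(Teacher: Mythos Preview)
Your reduction to an orbit count of $\Aut_\k(E,o_E)$ on $E[2]\setminus\{o_E\}$ via Lemmas~\ref{jjll} and~\ref{lem-pre-aa} is exactly what the paper does, and your argument is correct. The difference lies entirely in how the orbit count is settled in the special cases $j=0,12^3$. The paper stays inside the Hesse model throughout: it quotes the explicit generators of $\Aut_\k(E,o_E)$ on the Hesse form from \cite[Lemma~2.14]{Ma} (for $\lambda=0$ the map $(a:b:c)\mapsto(b:a:\varepsilon c)$, for $\lambda=1+\sqrt{3}$ the map $(a:b:c)\mapsto(a\varepsilon^2+b\varepsilon+c:a\varepsilon+b\varepsilon^2+c:a+b+c)$), writes down the three primitive $2$-torsion points explicitly, and applies $\tau$ to them by hand. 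You instead invoke the invariance of the orbit structure under isomorphism, pass to a Weierstrass model (or to the $\End(E)/2\End(E)$-module structure on $E[2]$), and argue structurally.

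Both are valid. The paper's route has the virtue of producing, at no extra cost, the explicit representatives $\mathcal{E}_{\sqrt[3]{-2}}$ and $\mathcal{E}_{1+\sqrt{3}}$, $\mathcal{E}_{-\frac{1}{2}(1+\sqrt{3}+\sqrt{6}\sqrt[4]{3})}$ that are actually used later (e.g.\ in Lemma~\ref{srs-lem-ga} and Theorem~\ref{fphif}), and it keeps everything within the Hesse-form framework already set up in Section~\ref{sec-pre}. Your route is cleaner conceptually and avoids the somewhat opaque Hesse-form automorphism in case~(iii), but it imports Weierstrass models or CM facts not otherwise present in the paper, and it does not directly hand you the Hesse-form $\xi$-values needed downstream.
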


\begin{proof}
Consider two algebras $\mathcal{E}_{\xi} = \mathcal{A}(E_{\lambda(\xi)},\sigma_{(1:1:\xi)}), \mathcal{E}_{\xi'} = \mathcal{A}(E_{\lambda({\xi'})},\sigma_{(1:1:\xi')})$ with the same $j$-invariant. By Lemma \ref{jjll}, we can assume $\lambda(\xi) = \lambda(\xi') =: \lambda$ without loss of generality. So we write $E = E_{\lambda}$ for the rest of this proof. By Lemma \ref{lem-pre-aa}, two graded algebras $\mathcal{A}(E,\sigma_{(1:1:\xi)}),\mathcal{A}(E,\sigma_{(1:1:\xi')})$ are isomorphic if and only if $ \tau^l(1:1:\xi) =(1:1: \xi')$ where $\tau$ is a generator of $\Aut_{\k}(E,o_E)$ and $l \in \mathbbm{Z}_{|\tau|}$. The generators for three cases {\rm(i)} $ j \neq 0, 12^3$, {\rm(ii)} $\lambda = 0$, {\rm(iii)} $\lambda = 1+\sqrt{3}$ are already given in \cite[Lemma 2.14]{Ma}. Also note that there are $3$ primitive $2$-torsion points on $E$.

(i) If $j \neq 0, 12^3$, then the generator $\tau$ of $\Aut_{\k}(E,o_E)$ is given by 
$
\tau(a:b:c) = (b:a:c)
$
where $(a:b:c) \in E$. Thus $\mathcal{A}(E,\sigma_{(1:1:\xi)}) \cong \mathcal{A}(E,\sigma_{(1:1:\xi')})$ as graded algebras if and only if $\xi = \xi'$.

(ii) When $\lambda = 0$ and thus $j = 0$, the $3$ primitive $2$-torsion points are
$$
(1: 1: \sqrt[3]{-2}), (1:1: \sqrt[3]{-2}\varepsilon), (1:1:\sqrt[3]{-2}\varepsilon^2) 
$$ 
where $\varepsilon$ is a primitive $3$rd root of unity. The generator $\tau$ of $\Aut_{\k}(E,o_E)$ when $\lambda = 0$ is given by 
$$
\tau(a:b:c) = (b:a:\varepsilon c)
$$
where $(a:b:c) \in E$. Then $\tau(1: 1: \sqrt[3]{-2}) = (1: 1: \sqrt[3]{-2}\varepsilon), \tau^2(1: 1: \sqrt[3]{-2}) = (1: 1: \sqrt[3]{-2}\varepsilon^2).$ Thus $\mathcal{E}_\xi \cong \mathcal{E}_{\sqrt[3]{-2}}$.

(iii) When $\lambda = 1+\sqrt{3}$ and thus $j = 12^3$, the $3$ primitive $2$-torsion points are 
$$
(1: 1: 1+\sqrt{3}), (1:1: -\frac{1}{2}(1+\sqrt{3} +\sqrt{6}\sqrt[4]{3})), (1:1:-\frac{1}{2}(1+\sqrt{3} -\sqrt{6}\sqrt[4]{3})).
$$ 
The generator $\tau$ of $\Aut_{\k}(E,o_E)$ when $\lambda = 1+\sqrt{3}$ is given by 
$$
\tau(a:b:c) = (a \varepsilon^2 + b\varepsilon + c: a\varepsilon+ b\varepsilon^2 +c: a+b+c)
$$
where $(a:b:c) \in E$. Then 
\begin{equation*}
\begin{aligned}
&\tau^l (1: 1: 1+\sqrt{3}) = (1: 1: 1+\sqrt{3}) \ \text{for all} \ l \in \mathbbm{Z}_{|\tau|}, \\
&\tau(1:1: -\frac{1}{2}(1+\sqrt{3} +\sqrt{6}\sqrt[4]{3})) = (1:1:-\frac{1}{2}(1+\sqrt{3} -\sqrt{6}\sqrt[4]{3})), \\
&\tau(1:1:-\frac{1}{2}(1+\sqrt{3} -\sqrt{6}\sqrt[4]{3})) = (1:1: -\frac{1}{2}(1+\sqrt{3} +\sqrt{6}\sqrt[4]{3})).
\end{aligned}
\end{equation*}
Thus $\mathcal{E}_\xi$ is isomorphic to exactly one of $\mathcal{E}_{1+ \sqrt{3}},\mathcal{E}_{-\frac{1}{2}(1+\sqrt{3} +\sqrt{6}\sqrt[4]{3})}$.

\end{proof}

For the second step of classification of pairs, we should first consider groups of graded algebra automorphisms of algebras $\mathcal{S}, \mathcal{S}',\mathcal{N},\mathcal{E}_\xi$ in Equation \ref{ddn}.

\begin{lem} \label{srs-lem-ga} 
\begin{itemize}
\item[(1)] {\rm Type S:}
$
\GrAut \mathcal{S} 
$
is generated by 
$$
\left. \left\{ \begin{pmatrix} a & 0 & 0 \\ 0 & b & 0 \\ 0 & 0 & c \end{pmatrix}, \begin{pmatrix} 0 & 1 & 0 \\ 1 & 0 & 0 \\ 0 & 0 & 1 \end{pmatrix},\begin{pmatrix} 1 & 0 & 0 \\ 0 & 0 & 1 \\ 0 & 1 & 0 \end{pmatrix} \right| a,b,c \in \k^\times \right\}.
$$
\item[(2)]  {\rm Type S$'$:}
$
\GrAut \mathcal{S'}
$ is generated by 
$$
\left. \left\{ \begin{pmatrix} a & 0 & 0 \\ 0 & a & 0 \\ 0 & 0 & a \end{pmatrix}, \begin{pmatrix} 1 & 0 & 0 \\ 0 & b & 0 \\ 0 & 0 & b^{-1}\end{pmatrix}, \begin{pmatrix} 1 & 0 & 0 \\ 0 & 0 & 1 \\ 0 & 1 & 0 \end{pmatrix} \right| \begin{matrix} a,b \in \k^\times \end{matrix}\right\}.
$$
\end{itemize}
Let $\varepsilon$ be a primitive $3$rd root of unity. 
\begin{itemize}
\item[(3)] {\rm Type NC:}
$
\GrAut \mathcal{N}$ is generated by 
$$
\left. \left\{\begin{pmatrix} a & 0 & 0 \\ 0 & a & 0 \\ 0 & 0 & a \end{pmatrix},  \begin{pmatrix} \varepsilon^2 & 0 & 0 \\ 0 & 1 & 0 \\ 0 & 0 & \varepsilon \end{pmatrix}, \begin{pmatrix} 0 & 1 & 0 \\ 1 & 0 & 0 \\ 0 & 0 & 1 \end{pmatrix} \right| a\in \k^\times \right\}.
$$
\item[(4)] {\rm Type EC:} {\rm (i)} When $\mathcal{E}_{\xi} \cong \mathcal{E}_{1+ \sqrt{3}}$, 
$
\GrAut \mathcal{E}_{1+ \sqrt{3}} 
$
is generated by 
$$
\left. \left\{ \begin{pmatrix} a & 0 & 0 \\ 0 & a & 0 \\ 0 & 0 & a \end{pmatrix},  \begin{pmatrix} \varepsilon^2 & 0 & 0 \\ 0 & 1 & 0 \\ 0 & 0 & \varepsilon \end{pmatrix}, \begin{pmatrix} \varepsilon^2 & \varepsilon & 1 \\ \varepsilon & \varepsilon^2 & 1 \\ 1 & 1 & 1 \end{pmatrix},\begin{pmatrix} 1 & 0 & 0 \\ 0 & 0 & 1 \\ 0 & 1 & 0 \end{pmatrix} \right| a \in \k^\times \right\}.
$$
{\rm (ii)} Otherwise, 
$
\GrAut \mathcal{E}_{\xi} 
$
is generated by 
$$
\left. \left\{\begin{pmatrix} a & 0 & 0 \\ 0 & a & 0 \\ 0 & 0 & a \end{pmatrix},   \begin{pmatrix} \varepsilon^2 & 0 & 0 \\ 0 & 1 & 0 \\ 0 & 0 & \varepsilon \end{pmatrix}, \begin{pmatrix} 0 & 1 & 0 \\ 1 & 0 & 0 \\ 0 & 0 & 1 \end{pmatrix},\begin{pmatrix} 1 & 0 & 0 \\ 0 & 0 & 1 \\ 0 & 1 & 0 \end{pmatrix} \right| a \in \k^\times \right\}.
$$
\end{itemize}
\end{lem}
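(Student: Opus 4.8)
The plan is to use that each algebra $S$ in Equation \ref{ddn} is quadratic and generated in degree one, so a graded algebra automorphism $\varphi$ of $S = T(V)/(R)$ is completely determined by $g := \varphi|_V \in \GL(V)$, and $\varphi$ is well defined exactly when $g^{\ot 2}(R) = R$; hence $\GrAut S$ is identified with the subgroup of $\GL_3(\k)$ of such $g$ (equivalently, since the twisted superpotential with $S = \mathcal{D}(\omega)$ is unique up to scalar, with $\{g : g^{\ot 3}(\omega) \in \k^\times \omega\}$). For each of the four types I would argue in two steps: first verify directly that each matrix listed in the statement lies in $\GrAut S$ — a routine check that $g^{\ot 2}$ carries each defining relation into the span of the defining relations — and then show these matrices already generate $\GrAut S$.

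The engine for the second step is the point scheme $\Gamma \subset \mathbbm{P}(V^*)$ of $S$, which every $g \in \GrAut S$ must preserve. Computing the determinant of the matrix of relations identifies $\Gamma$: it is the triangle $\mathcal{V}(xyz)$ for Type S, a smooth conic together with one of its secant lines for Type S$'$, a nodal cubic with node at $(0:0:1)$ for Type NC, and the elliptic curve $E_{\lambda(\xi)}$ for Type EC. For Types S, S$'$, NC the requirement $\tau(\Gamma)=\Gamma$ already forces a very restricted matrix shape — $g$ permutes the three linear components (Type S), or fixes the secant line and either fixes or interchanges its two contact points with the conic (Type S$'$), or fixes the node and either fixes or interchanges the two branch directions there (Type NC) — and then imposing $g^{\ot 2}(R) = R$ and matching the few remaining coefficients leaves exactly the listed generators, together with the scalar matrices $\operatorname{diag}(a,a,a)$, which always belong to $\GrAut S$.

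For Type EC the argument is structural. By the excerpt, $\mathcal{E}_\xi = \mathcal{A}(E_{\lambda(\xi)}, \sigma_{(1:1:\xi)})$ with $p := (1:1:\xi) \in E[2]$, so Lemma \ref{eett} (applied with $S = S'$) identifies $\GrAut \mathcal{E}_\xi$, modulo the scalar matrices, with the group of $\tau \in \Aut_\k \mathbbm{P}(V^*)$ satisfying $\tau(E) = E$ and $\tau \sigma_p \tau^{-1} = \sigma_p$ on $E$. Since $\tau$ preserves the elliptic curve $E := E_{\lambda(\xi)}$ it is a linear automorphism of $E$, hence of the form $\sigma_q \circ \rho$ with $q$ a $3$-torsion point and $\rho \in \Aut_\k(E, o_E)$; the commutation relation then reduces to $\sigma_{\rho(p)} = \sigma_p$, i.e. $\rho(p) = p$. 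So $\GrAut \mathcal{E}_\xi$ modulo scalars is the group generated by the $3$-torsion translations and by the stabilizer of $p$ in $\Aut_\k(E, o_E)$. Now feed in the description of $\Aut_\k(E, o_E)$ by $j$-invariant from \cite[Lemma 2.14]{Ma}: when $j \neq 0, 12^3$ it is $\langle [-1] \rangle$, which fixes every $2$-torsion point; when $j = 0$ its order-$3$ element permutes the three primitive $2$-torsion points cyclically, so only $[-1]$ survives; when $j = 12^3$ the generator has order $4$ and acts on the three primitive $2$-torsion points as a transposition, hence stabilizes $p$ precisely when $p$ is the distinguished point $(1:1:1+\sqrt{3})$ singled out in Theorem \ref{ec-gra-ale}(iii). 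Reading off $\operatorname{diag}(1,\varepsilon,\varepsilon^2)$ and a cyclic permutation matrix as generators of the $3$-torsion part, the transposition $(x\,y)$ (respectively the order-$4$ matrix displayed in the statement) as a generator of the stabilizer, and using that two transpositions generate the $3$-cycle, one recovers exactly the generating sets in (4).

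I expect the main obstacle to be the Type EC case — in particular the $j = 12^3$ dichotomy, where one must track the action of the order-$4$ automorphism on $E[2]$ to see why $\mathcal{E}_{1+\sqrt{3}}$ behaves differently from the other curves of the same $j$-invariant — together with the bookkeeping needed to pass cleanly between matrices acting on $V$ and automorphisms of $\Gamma \subset \mathbbm{P}(V^*)$ (the contragredient action), and the verification that the explicit order-$4$ matrix is the one realizing the relevant automorphism of $E$. The direct checks in the first step, and the coefficient-matching for Types S, S$'$, NC, are straightforward but somewhat lengthy.
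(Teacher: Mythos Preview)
Your proposal is a valid and essentially self-contained proof strategy, but the paper does not actually prove this lemma: its entire proof is ``$(1),(2),(3)$ follow from \cite{Y}. $(4)$ follows from \cite[Proposition 4.7]{Ma}.'' So the comparison is between your direct argument and a bare citation.

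Your approach is sound. The identification of $\GrAut S$ with $\{g \in \GL(V) : g^{\ot 2}(R) = R\}$ is correct since each $S$ is quadratic and generated in degree one. For the non-EC types, using the point scheme to constrain the matrix shape is the standard technique, and the point schemes you name --- the triangle $\mathcal{V}(xyz)$, a conic plus a secant line, and a nodal cubic with node at $(0{:}0{:}1)$ --- are indeed what one gets from the determinant of the matrix of relations. For Type EC, your reduction via Lemma~\ref{eett} to $\tau \in \Aut_\k \mathbbm{P}(V^*)$ with $\tau(E)=E$ and $\tau\sigma_p\tau^{-1}=\sigma_p$, followed by the decomposition $\tau = \sigma_q \circ \rho$ with $q \in E[3]$ and $\rho \in \Aut_\k(E,o_E)$, is correct, and the commutation condition does reduce to $\rho(p)=p$. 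Your reading of the $j=12^3$ dichotomy is also right: that is precisely where the order-$4$ generator of $\Aut_\k(E,o_E)$ fixes one primitive $2$-torsion point and swaps the other two.

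What each approach buys: the paper's citation is short but leans on an ``in preparation'' reference \cite{Y} for three of the four cases; your argument is self-contained and explains the $j=12^3$ anomaly, but the explicit coefficient-matching for Types S$'$ and NC and the verification of the specific order-$4$ matrix in (4)(i) are tedious to write out in full. If you carry this through, the main care points are exactly the ones you flag: the contragredient bookkeeping between $V$ and $V^*$, and confirming that the displayed matrix really induces the order-$4$ element of $\Aut_\k(E,o_E)$ for $\lambda = 1+\sqrt{3}$.
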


\begin{proof}
$(1),(2),(3)$ follow from \cite{Y}. $(4)$ follows from \cite[Proposition 4.7]{Ma}.
\end{proof}

Then we provide an efficient way to check whether two pairs are isomorphic.  

\begin{thm} \label{fphif}
Let $S$ be a $3$-dimensional quantum polynomial algebra determined by a symmetric regular superpotential. Then
$$
(S, a x^2 + b y^2 + c z^2) \cong (S, a' x^2 + b' y^2 + c' z^2)
$$ 
if and only if there is $\varphi \in \GrAut S$ such that
$$
\begin{pmatrix} a' \\ b' \\ c' \end{pmatrix} = \varphi \begin{pmatrix} a \\ b \\ c \end{pmatrix}.
$$ 
\end{thm}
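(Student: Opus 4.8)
The plan is to recast the statement as the equality of two subgroups of $\GL_3(\k)$. Write $G := \GrAut S$. Let $\rho_1 \colon G \to \GL_3(\k)$ be the tautological action on $S_1 = \langle x,y,z \rangle$ in the basis $x,y,z$; this is the matrix presentation of $G$ used in Lemma \ref{srs-lem-ga}. Since any graded algebra automorphism of $S$ carries central degree-two elements to central degree-two elements and since $Z(S)_2 = \langle x^2, y^2, z^2 \rangle$ by Lemma \ref{sp-lem1}, every $\varphi \in G$ restricts to a linear automorphism of $Z(S)_2$; let $\rho_2 \colon G \to \GL_3(\k)$ be the resulting action written in the basis $x^2, y^2, z^2$. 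For $f = ax^2 + by^2 + cz^2$ one then has $\varphi(f) = a'x^2 + b'y^2 + c'z^2$ exactly when $(a',b',c')^{\mathrm{T}} = \rho_2(\varphi)(a,b,c)^{\mathrm{T}}$, so $(S, f) \cong (S, a'x^2 + b'y^2 + c'z^2)$ if and only if $(a',b',c')^{\mathrm{T}}$ lies in the $\rho_2(G)$-orbit of $(a,b,c)^{\mathrm{T}}$. Since the condition in the theorem says the same thing with $\rho_1(G)$ in place of $\rho_2(G)$, it suffices to prove $\rho_1(G) = \rho_2(G)$.

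Inspecting Lemma \ref{srs-lem-ga}, every generator of $G$ is a permutation matrix, a diagonal matrix, or a scalar matrix, with the single exception of the non-monomial matrix $M$ occurring in Type EC case (i). On a permutation generator $P$ (a transposition of two of $x,y,z$) we get $\rho_2(P) = P$, since $P$ permutes $x^2, y^2, z^2$ in the same way; on a diagonal generator, $\rho_2(\operatorname{diag}(d_1,d_2,d_3)) = \operatorname{diag}(d_1^2,d_2^2,d_3^2)$; and $\rho_2(a\,\id) = a^2\,\id$. Because $\k$ is algebraically closed, each $\rho_2(\text{generator})$ again lies in $\rho_1(G)$, and conversely each generator lies in $\rho_2(G)$ (use square roots for the diagonal generators, and $\operatorname{diag}(\varepsilon^2,1,\varepsilon) = \rho_2(\operatorname{diag}(\varepsilon^2,1,\varepsilon))^{-1}$ for the order-$3$ generators). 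This yields $\rho_1(G) = \rho_2(G)$ for Types S, S$'$, NC and Type EC case (ii).

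It remains to handle the extra generator $M$ in Type EC case (i), i.e. $S \cong \mathcal{E}_{1+\sqrt 3}$, and here a genuine computation is needed. Expanding $\varphi(x)^2, \varphi(y)^2, \varphi(z)^2$ for $\varphi = M$ and reducing the cross terms via the defining relations $xy+yx = -\xi z^2$, $yz+zy = -\xi x^2$, $zx+xz = -\xi y^2$ of $\mathcal{E}_\xi$ together with $\varepsilon^3 = 1$ gives
$$
\rho_2(M) = (1-\xi)\begin{pmatrix} \varepsilon & \varepsilon^2 & 1 \\ \varepsilon^2 & \varepsilon & 1 \\ 1 & 1 & 1 \end{pmatrix},
$$
while a short matrix computation gives $M^2 = 3P$, where $P$ is the permutation matrix swapping $x$ and $y$; hence $M^3 = 3PM$ is $3$ times the matrix in the display, so $\rho_2(M) = \tfrac{1-\xi}{3}\,M^3$ (with $1-\xi \ne 0$), and $M^4 = 9\,\id$. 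Since $\rho_1(G)$ contains $M$ and all scalar matrices, $\rho_2(M) \in \rho_1(G)$; and since $\rho_2(G)$ contains all scalar matrices (as $\rho_2(a\,\id) = a^2\,\id$) and contains $M^3 = \tfrac{3}{1-\xi}\,\rho_2(M)$, the identity $M = M^4(M^3)^{-1} = 9\,\id\cdot(M^3)^{-1}$ shows $M \in \rho_2(G)$. Combined with the monomial generators treated above, this gives $\rho_1(G) = \rho_2(G)$ in the last case and finishes the proof.

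I expect the main obstacle to be exactly this Type EC case (i) computation: evaluating the non-monomial automorphism $M$ on $x^2, y^2, z^2$ is the only step where the quadratic relations of $S$ genuinely intervene, and one must then recognise the output as $M^3$ up to a nonzero scalar so that it stays inside $\rho_1(G)$ and, using that $\rho_2(G)$ already contains all scalars, that $M$ in turn lies in $\rho_2(G)$. The remaining generators are permutation, diagonal, or scalar matrices, for which the comparison of $\rho_1$ and $\rho_2$ is routine and uniform because $\k$ is algebraically closed.
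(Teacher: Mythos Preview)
Your proof is correct and follows essentially the same approach as the paper: both arguments work generator by generator through the list in Lemma~\ref{srs-lem-ga}, computing how each automorphism transforms the coefficient vector of $ax^2+by^2+cz^2$ and recognizing the result as (a scalar times) another element of $\GrAut S$. Your recasting in terms of the equality $\rho_1(G)=\rho_2(G)$ is a tidy way to package this check and makes both directions of the ``if and only if'' explicit; your use of $M^2=3P_{xy}$ and $M^4=9\,\id$ to place $M$ back in $\rho_2(G)$ is a nice way to close the Type~EC~(i) case, which the paper handles by directly identifying $\rho_2(M)$ with a scalar multiple of another generator.
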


\begin{proof}
We will give the proof for Type EC when $\mathcal{E}_{\xi} \cong \mathcal{E}_{1+ \sqrt{3}}$. The other cases are similar. Consider $(\mathcal{E}_{1+ \sqrt{3}}, a x^2 + b y^2 + c z^2), (\mathcal{E}_{1+ \sqrt{3}}, a' x^2 + b' y^2 + c' z^2)$ where $a,b,c,a',b',c' \in \k$. By Lemma \ref{srs-lem-ga}, there are $3$ generators for $\GrAut \mathcal{E}_{1+ \sqrt{3}}$ up to scalar. Let $\varepsilon$ be a primitive $3$rd root of unity. In the first case:   
$$
\begin{pmatrix} \varepsilon^2 & 0 & 0 \\ 0 & 1 & 0 \\ 0 & 0 & \varepsilon \end{pmatrix}: a x^2 + b y^2 + c z^2 \mapsto \varepsilon a x^2 + b y^2 + \varepsilon^2 c z^2
$$
is an isomorphism of pairs if and only if
$
\begin{pmatrix} a' \\ b' \\ c' \end{pmatrix} = \begin{pmatrix} \varepsilon & 0 & 0 \\ 0 & 1 & 0 \\ 0 & 0 & \varepsilon^2 \end{pmatrix} \begin{pmatrix} a \\ b \\ c \end{pmatrix}
$ where $ \begin{pmatrix} \varepsilon & 0 & 0 \\ 0 & 1 & 0 \\ 0 & 0 & \varepsilon^2 \end{pmatrix} \in \GrAut \mathcal{E}_{1+ \sqrt{3}}$. In the second case:
$$
\begin{pmatrix} \varepsilon^2 & \varepsilon & 1 \\ \varepsilon & \varepsilon^2 & 1 \\ 1 & 1 & 1 \end{pmatrix}: a x^2 + b y^2 + c z^2 \mapsto (1-\sqrt{3})((\varepsilon a + \varepsilon^2 b + c) x^2 + (\varepsilon^2 a + \varepsilon b + c) y^2 + (a+b+c) z^2)
$$
is an isomorphism of pairs if and only if 
$
\begin{pmatrix} a' \\ b' \\ c' \end{pmatrix} = (1-\sqrt{3})\begin{pmatrix} \varepsilon & \varepsilon^2 & 1 \\ \varepsilon^2 & \varepsilon & 1 \\ 1 & 1 & 1 \end{pmatrix} \begin{pmatrix} a \\ b \\ c \end{pmatrix}
$
where $\begin{pmatrix} \varepsilon & \varepsilon^2 & 1 \\ \varepsilon^2 & \varepsilon & 1 \\ 1 & 1 & 1 \end{pmatrix} \in \GrAut \mathcal{E}_{1+ \sqrt{3}}$. In the last case: 
$$
\begin{pmatrix} 1 & 0 & 0 \\ 0 & 0 & 1 \\ 0 & 1 & 0 \end{pmatrix}: a x^2 + b y^2 + c z^2 \mapsto a x^2 + c y^2 + b z^2
$$
is an isomorphism of pairs if and only if $\begin{pmatrix} a' \\ b' \\ c' \end{pmatrix} = \begin{pmatrix} 1 & 0 & 0 \\ 0 & 0 & 1 \\ 0 & 1 & 0 \end{pmatrix} \begin{pmatrix} a \\ b \\ c \end{pmatrix}$.  
\end{proof}

\begin{example}{\rm
(1) Let $\varepsilon$ be a primitive $3$rd root of unity. $(\mathcal{E}_{1+\sqrt{3}}, x^2) \cong  (\mathcal{E}_{1+\sqrt{3}}, \varepsilon^2x^2 + \varepsilon y^2 + z^2)$ since 
$$
\begin{pmatrix} \varepsilon^2 \\ \varepsilon \\ 1 \end{pmatrix} = \begin{pmatrix} \varepsilon^2 & \varepsilon & 1 \\ \varepsilon & \varepsilon^2 & 1 \\ 1 & 1 & 1 \end{pmatrix} \begin{pmatrix} 1 \\ 0 \\ 0 \end{pmatrix}.
$$
(2) Let $a,b,c,a',b',c' \in \k^\times$, and $abc=a'b'c'$. Then $(\mathcal{E}_{\sqrt[3]{2}}, a x^2 + b y^2 + c z^2)\cong (\mathcal{E}_{\sqrt[3]{2}}, a' x^2 + b' y^2 + c' z^2)$ if and only if
$
\begin{pmatrix} a'^3 \\ b'^3 \\ c'^3 \end{pmatrix} = \sigma \begin{pmatrix} a^3 \\ b^3 \\ c^3 \end{pmatrix}
$ 
for some $\sigma \in \sym(3)$.
}
\end{example}
   
In the following, we give the classification of pairs of Type S, Type S$'$ and Type NC. We omit the Type EC case since the statement for Type EC case is tedious, and Theorem \ref{fphif} is more efficient to check whether two pairs of Tpye EC are isomorphic. 

\begin{thm} \label{prop-tab-sf-1}
The isomorphism classes of $(S,f)$ associated to symmetric regular superpotentials except for Type EC are listed in the Table \ref{tab-sf}. Moreover, for the Table \ref{tab-sf}, let $\alpha,\alpha',\beta,\beta' \in \k^\times$. Then
\begin{itemize}
\item[(1)] $(\mathcal{S'}, \alpha x^2 + y^2 + z^2) \cong (\mathcal{S'}, \alpha' x^2 + y^2 + z^2)$ if and only if $\alpha'^2 = \alpha^2$;
\item[(2)] $(\mathcal{N}, \alpha x^2 +  y^2) \cong (\mathcal{N}, \alpha' x^2 + y^2)$ if and only if $\alpha'^3 = \alpha^{\pm 3}$;
\item[(3)] $(\mathcal{N}, \alpha x^2 +  z^2) \cong (\mathcal{N}, \alpha' x^2 + z^2)$ if and only if $\alpha'^3 = \alpha^3$;
\item[(4)] $(\mathcal{N}, \alpha x^2 +  \beta y^2 + z^2) \cong (\mathcal{N}, \alpha' x^2 + \beta' y^2 + z^2)$ if and only if $(\alpha'^3,\beta'^3,\alpha' \beta') = (\alpha^3,\beta^3,\alpha \beta)$ or $(\beta^3,\alpha^3,\alpha \beta)$.

\end{itemize}
\begin{table}[h] 
\begin{threeparttable}
\caption{List of $(S,f)$ up to isomorphism.} \label{tab-sf}
\begin{tabular}{c|c} 
{\rm Type}  &$(S,f)$  \\ \hline \hline
{\rm S} & $(\mathcal{S}, x^2), (\mathcal{S}, x^2 + y^2), (\mathcal{S}, x^2 + y^2 + z^2)$ \\ \hline
{\multirow{2}{*}{{\rm S$'$}}} &  {\multirow{2}{*}{\begin{tabular}{c}$(\mathcal{S'}, x^2), (\mathcal{S'}, y^2), (\mathcal{S'}, x^2 + y^2), (\mathcal{S'}, y^2 + z^2),$\\$ (\mathcal{S'}, \alpha x^2 + y^2 + z^2)$\end{tabular}}}  \\ 
& \\ \hline
{\multirow{2}{*}{{\rm NC}}} & {\multirow{2}{*}{\begin{tabular}{c}$(\mathcal{N}, x^2), (\mathcal{N}, z^2), (\mathcal{N}, \alpha x^2 + y^2), (\mathcal{N}, \alpha x^2 + z^2),$ \\ $(\mathcal{N}, \alpha x^2 + \beta y^2 + z^2)$\end{tabular}}} \\ 
& \\ \hline
%{\rm EC} & $(\mathcal{E}_{\xi}, x^2), ( \mathcal{E}_{\xi}, \alpha x^2 + y^2), ( \mathcal{E}_{\xi}, \alpha x^2 + \beta y^2 + z^2 )$ \\ \hline
\end{tabular}
\begin{tablenotes}
\item[*] $\alpha,\beta \in \k^\times$. 
\end{tablenotes}
\end{threeparttable}
\end{table}
\end{thm}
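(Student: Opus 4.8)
The plan is to reduce the whole statement to a computation of group orbits. By Lemma \ref{sp-lem1} every central element of $S_2$ has the form $f=ax^2+by^2+cz^2$ with $(a,b,c)\in\k^3$, and since $S$ is a domain (\cite{ATV}) such an $f$ is regular exactly when $(a,b,c)\neq(0,0,0)$; thus the pairs in question are parametrized by $\k^3\setminus\{0\}$. By Theorem \ref{fphif}, two of them are isomorphic if and only if their coefficient vectors lie in the same orbit of the natural left action of $\GrAut S\subset\GL_3$ on $\k^3\setminus\{0\}$, with $\GrAut S$ described explicitly by the generators of Lemma \ref{srs-lem-ga}. So everything reduces to computing, for each $S=\mathcal{S},\mathcal{S'},\mathcal{N}$, these orbits, and the equivalences (1)--(4) are precisely the resulting orbit relations restricted to the parametrized sub-families appearing in Table \ref{tab-sf}.

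For Type S this is immediate: by Lemma \ref{srs-lem-ga}(1) the group is the full group of monomial $3\times3$ matrices (all nonzero diagonals, together with all of $\sym(3)$ since the two listed transpositions generate it), and its orbits on $\k^3\setminus\{0\}$ are classified by the number of nonzero coordinates — scale each nonzero coordinate to $1$, then permute — giving exactly $x^2$, $x^2+y^2$, $x^2+y^2+z^2$. For Types S$'$ and NC I would first record the group structure from Lemma \ref{srs-lem-ga}: for $\mathcal{S'}$ it is generated by the scalings $(a,b,c)\mapsto(sa,sb,sc)$, the one-parameter subgroup $(a,b,c)\mapsto(a,tb,t^{-1}c)$, and the transposition $(a,b,c)\mapsto(a,c,b)$; for $\mathcal{N}$ (with $\varepsilon$ a primitive cube root of unity) it is generated by the scalings, the order-three element $(a,b,c)\mapsto(\varepsilon a,b,\varepsilon^2 c)$, and the transposition $(a,b,c)\mapsto(b,a,c)$. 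In both cases the zero-pattern is nearly an invariant: for $\mathcal{S'}$ the condition $a=0$ is preserved by every generator, and for $\mathcal{N}$ the conditions $c=0$ and $ab=0$ are preserved by every generator. Splitting $\k^3\setminus\{0\}$ according to these patterns and normalizing within each piece produces exactly the representatives of the S$'$ and NC rows of Table \ref{tab-sf}; in particular the ``isolated'' representatives $x^2,y^2,z^2,x^2+y^2,y^2+z^2,\dots$ are pairwise non-isomorphic and distinct from the parametrized families because they carry different zero-patterns — distinguishing, e.g., $x^2$ from $z^2$ for $\mathcal{N}$ uses that the third coordinate is only scaled (never moved) by any generator, so the supports $\{z\}$ and $\{x,y\}$ cannot be interchanged.

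It then remains to read off the equivalences (1)--(4) as the residual identifications inside each full-support (or two-support) piece. For $\mathcal{S'}$ with full support one normalizes to $(\alpha,1,1)$ and observes that a group element fixing this shape must satisfy $st=st^{-1}=1$, i.e.\ $t^2=1$, hence sends $\alpha\mapsto\pm\alpha$; this is (1). For $\mathcal{N}$ the element $(a,b,c)\mapsto(\varepsilon a,b,\varepsilon^2c)$ contributes cube-root-of-unity ambiguities, which is the source of the cube conditions: normalizing $(\mathcal{N},\alpha x^2+z^2)$ to $(\alpha,0,1)$, the only residual freedom is $\alpha\mapsto\varepsilon^i\alpha$, so $\alpha'^3=\alpha^3$, giving (3); normalizing $(\mathcal{N},\alpha x^2+y^2)$ to $(\alpha,1,0)$ one gets in addition $\alpha\mapsto\alpha^{-1}$ from applying the transposition and rescaling, so $\alpha'^3=\alpha^{\pm3}$, giving (2); and normalizing $(\mathcal{N},\alpha x^2+\beta y^2+z^2)$ to $(\alpha,\beta,1)$ one obtains an action of a group $\cong\sym(3)$ on $(\alpha,\beta)$, generated by $(\alpha,\beta)\mapsto(\varepsilon^{-1}\alpha,\varepsilon\beta)$ and $(\alpha,\beta)\mapsto(\beta,\alpha)$, whose orbit of $(\alpha,\beta)$ is $\{(\varepsilon^{-i}\alpha,\varepsilon^{i}\beta),(\varepsilon^{-i}\beta,\varepsilon^{i}\alpha):i=0,1,2\}$; this is (4).

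The main obstacle — the only step beyond bookkeeping — is showing that these discrete data are \emph{complete} invariants, not merely invariant. The key point is arithmetic: if $\alpha'=\varepsilon^i\alpha$, $\beta'=\varepsilon^j\beta$ and $\alpha'\beta'=\alpha\beta$, then $i+j\equiv0\pmod3$, so $(\alpha',\beta')=(\varepsilon^i\alpha,\varepsilon^{-i}\beta)$ already lies in the cyclic part of the orbit; and the case $(\alpha'^3,\beta'^3,\alpha'\beta')=(\beta^3,\alpha^3,\alpha\beta)$ is exactly the one reached through the transposition, which produces the ``or'' in (4). The analogous, easier completeness checks for (1), (2), (3) follow the same pattern. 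I would therefore spend most of the write-up making the normalizations in the previous paragraph precise and carrying out this completeness argument carefully; the rest is routine verification against Lemma \ref{srs-lem-ga} and Table \ref{tab-sf}.
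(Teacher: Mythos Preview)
Your proposal is correct and follows essentially the same approach as the paper: reduce via Lemma~\ref{sp-lem1} and Theorem~\ref{fphif} to the orbit computation for $\GrAut S$ acting on coefficient vectors $(a,b,c)$, read off the group from Lemma~\ref{srs-lem-ga}, stratify by the invariant zero-patterns, and compute the residual identifications within each stratum. The paper's proof only sketches the $\mathcal{N}$ case and declares the rest ``similar'', so your write-up is in fact more explicit (for instance, you spell out the structure of the group elements as $sD^i\tau^{\epsilon}$ and the completeness argument for~(4)), but the underlying method is identical.
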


\begin{proof}
We will give the calculation for $\mathcal{N}$. The other cases are similar. Let $\alpha,\beta \in \k^\times.$ It is clear that 
\begin{equation*}
\begin{aligned}
&(\mathcal{N},\alpha x^2) \cong (\mathcal{N}, x^2) \cong (\mathcal{N}, y^2) \cong (\mathcal{N}, \beta y^2), \\
&(\mathcal{N}, \alpha z^2) \cong (\mathcal{N}, z^2), \\
&(\mathcal{N}, x^2 + z^2) \cong (\mathcal{N}, y^2 + z^2).
\end{aligned}
\end{equation*} 
Since $(\mathcal{N}, \alpha x^2 + \beta y^2) \cong (\mathcal{N}, \frac{\alpha}{\beta} x^2 + y^2)$ where $\alpha, \beta \in \k^\times$, 
we can consider $(\mathcal{N}, \alpha x^2 + y^2)$ without loss of generality. Consider $(\mathcal{N}, \alpha x^2 + y^2), (\mathcal{N}, \alpha' x^2 + y^2)$ where $\alpha,\alpha' \in \k^\times$. Since the group of graded algebra automorphisms $\GrAut \mathcal{N}$ is already given in  Lemma \ref{srs-lem-ga}, we can use Theorem \ref{fphif} and obtain that $(\mathcal{N}, \alpha x^2 + y^2) \cong (\mathcal{N}, \alpha' x^2 + y^2)$ if and only if $\alpha' = \mu \alpha$ or $\alpha' = \mu \alpha^{-1}$ where $\mu$ is a $3$rd root of unity. Then we have $(2)$. Similarly, for $(\mathcal{N}, \alpha x^2 + \beta z^2)$ and $(\mathcal{N}, \alpha x^2 + \beta y^2 + \gamma z^2)$ where $\alpha,\beta,\gamma \in \k^\times$, we can consider $(\mathcal{N}, \alpha x^2 + z^2)$ and $(\mathcal{N}, \alpha x^2 + \beta y^2 + z^2)$ without loss of generality, and get $(3),(4)$. 

On the other hand, it is easy to see that 
\begin{equation*}
\begin{aligned}
(\mathcal{N}, x^2), (\mathcal{N}, z^2), (\mathcal{N}, \alpha x^2 + y^2), (\mathcal{N}, \alpha x^2 + z^2), (\mathcal{N}, \alpha x^2 + \beta y^2 + z^2)
\end{aligned}
\end{equation*}
are not isomorphic to each other. 
\end{proof}

Since the algebra $C(A)$ is important to study a noncommutative conic $A = S/(f)$, we give algebras $C(A)$ associated to symmetric regular superpotentials except for Type EC in the following. 

\begin{thm} \label{ca-thm1} 
The algebras $C(A)$ associated to symmetric regular superpotentials except for Type EC are listed in the Table \ref{tab-ca}. Moreover, we give quadratic duals $A^!$ of noncommutative conics $A$.
\end{thm}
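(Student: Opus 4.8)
The strategy is to use Proposition \ref{cd-prop}(1), which identifies $C(A)$ with the degree-zero part $S^!(\theta_f)_0$ of the Clifford deformation, and to run through the finitely many representatives of isomorphism classes of pairs produced in Theorem \ref{prop-tab-sf-1} (one from each row of Table \ref{tab-sf}, the families in Types S$'$ and NC carrying their parameters). For each of $\mathcal{S}$, $\mathcal{S'}$, $\mathcal{N}$ I would first write down $S^!$ and the orthogonal complement $R^\perp \subset V^* \ot V^*$ explicitly: since $S^!$ is commutative -- which is exactly the $(\Leftarrow)$ direction of Theorem \ref{thm-srs} -- the space $R^\perp$ is spanned by the three commutators $x_i^* x_j^* - x_j^* x_i^*$ together with three further quadrics read off from Equation \ref{ddn}, and $S^!$ itself is a commutative quotient $\k[x,y,z]/(q_1,q_2,q_3)$ with Hilbert series $(1+t)^3$ (since $S$ is a $3$-dimensional quantum polynomial algebra). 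For a central regular $f = a x^2 + b y^2 + c z^2 \in Z(S)_2$ (Lemma \ref{sp-lem1}), the induced Clifford map $\theta_f \colon R^\perp \to \k$ is, under the identification of $\theta_f$ with the central element $\bar f \in S_2$ recalled in Section \ref{sec-pre}, simply the pairing $r \mapsto \langle r,\, a\, x\ot x + b\, y\ot y + c\, z\ot z\rangle$; in particular $\theta_f$ annihilates every commutator, so $S^!(\theta_f) = T(V^*)/(r - \theta_f(r) : r \in R^\perp)$ is a commutative algebra, and it is completely explicit.

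Next I would extract the even part. As $S^!(\theta_f)$ is filtered with associated graded $S^!$ we get $\dim_\k S^!(\theta_f) = 8$, so the even part $S^!(\theta_f)_0 = C(A)$ is $4$-dimensional, matching Proposition \ref{nqh-prop1}(3); and since $S^!(\theta_f) = F_3 S^!(\theta_f)$, this even part is spanned by $1$ together with the nine products $x_i^* x_j^*$. Using commutativity and the three deformed quadratic relations these reduce to the spanning set $\{1,\, x^*y^*,\, y^*z^*,\, z^*x^*\}$, which by the dimension count is in fact a basis. To obtain the multiplication table on this basis I would choose a Gr\"obner, equivalently PBW, basis of $S^!(\theta_f)$ exactly as in the proof of Lemma \ref{sp-lem1}, reduce the relevant products, and then recognize the resulting algebra among the finitely many $4$-dimensional commutative Frobenius algebras (products of the local pieces $\k$, $\k[t]/(t^2)$, $\k[t]/(t^3)$, $\k[t]/(t^4)$, $\k[s,t]/(s^2,t^2)$) -- that $C(A)$ is Frobenius being guaranteed by Lemma \ref{cd-prop-2}. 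When parameters are present one carries $a,b,c$ through the reduction and reads off the isomorphism type on the allowed range. Finally, for the column recording $A^!$ I would compute $R_A^\perp$ with $R_A = R + \k f$; this is the subspace of $R^\perp$ cut out by one further linear condition determined by $f$, so $A^!$ comes out as an explicit commutative algebra $\k[x,y,z]/(q_1,q_2)$.

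The only real obstacle is bookkeeping: there are about a dozen pairs, two of them honest one- or two-parameter families inside Types S$'$ and NC, and for each one the reduction of the even part of $S^!(\theta_f)$ to a recognizable commutative Frobenius algebra is a short but genuine computation in which one must track the exceptional parameter values at which the algebra degenerates -- in particular the locus where $C(A)$ ceases to be semisimple, equivalently (Proposition \ref{nqh-prop1}(5)) where $A$ fails to be a noncommutative isolated singularity. Beyond Propositions \ref{nqh-prop1} and \ref{cd-prop}, Lemma \ref{cd-prop-2}, and the PBW machinery already exploited for Lemma \ref{sp-lem1}, no new idea is required; the proof is the organized execution of these finitely many explicit computations, recorded in Table \ref{tab-ca}.
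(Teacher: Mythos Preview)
Your proposal is correct and follows essentially the same route as the paper: compute the Clifford deformation $S^!(\theta_f)$ for each representative pair, take its even part, and identify the resulting $4$-dimensional commutative Frobenius algebra. The paper carries this out in detail only for Type NC and declares the rest ``similar''; the one computational refinement worth noting is that for $(\mathcal{N},\,\alpha x^2+\beta y^2+z^2)$ the paper observes that $C(A)$ is \emph{monogenic}, generated by $u=yz$ with a quartic minimal relation, so the isomorphism type is read off directly from the root multiplicities (discriminant $\Delta_{\alpha,\beta}$, triple root, etc.)\ rather than from a full multiplication table on $\{1,xy,yz,zx\}$---a shortcut you would likely discover in the course of your reduction anyway.
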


\begin{table}[h] 
\begin{center}
\begin{threeparttable}
\caption{Algebras $C(A)$ and quadratic duals $A^!$.} \label{tab-ca} 
\begin{tabular}{cc|c|c|c} 
\multicolumn{2}{c|}{$(S,f)$} & $A^!$ & Conditions & $C(A)$ \\ \hline \hline
{\multirow{3}{*}{$\mathcal{S}$}} &$x^2$   & $\k[x,y,z]/(y^2,z^2)$ & &  $\k[u,v]/(u^2,v^2)$ \\  \cline{2-3} \cline{5-5}
&$x^2 + y^2$  & $\k[x,y,z]/(x^2 - y^2, z^2)$ & &$\k[u]/(u^2)^{\times 2}$ \\ \cline{2-3} \cline{5-5}
&$x^2 + y^2 + z^2$   & $\k[x,y,z]/(x^2 - y^2, x^2 - z^2)$ & &$\k^4$\\ \hline
{\multirow{6}{*}{$\mathcal{S'}$}}&$x^2$    & $\k[x,y,z]/(y^2,z^2)$ & &$\k[u,v]/(u^2,v^2)$\\ \cline{2-3} \cline{5-5}
&$y^2$   &$\k[x,y,z]/(x^2 - yz, z^2)$ & & $\k[u]/(u^4)$ \\ \cline{2-3} \cline{5-5}
&$x^2 + y^2$  & $\k[x,y,z]/(x^2 - yz - y^2, z^2)$ & &$\k[u]/(u^2)^{\times 2}$\\ \cline{2-5}
&{\multirow{2}{*}{$\alpha x^2 + y^2 + z^2$}}  & {\multirow{2}{*}{\begin{tabular}{c}$\k[x,y,z]/(y^2 - z^2,$ \\ $x^2 - yz - \alpha y^2)$\end{tabular}}} & $\alpha = \pm1$ &$\k[u]/(u^2) \times \k^2$ \\ \cline{4-5}
&  &   & $\alpha \neq \pm 1$ &$\k^4$ \\ \hline
{\multirow{5}{*}{$\mathcal{N}$}}
&{\multirow{2}{*}{$\alpha x^2 + y^2$}}   & {\multirow{2}{*}{\begin{tabular}{c}$\k[x,y,z]/$ \\ $(x^2  - yz - \alpha (y^2 -  xz), z^2)$\end{tabular}}} & $\alpha = 0$ & $\k[u]/(u^4)$ \\ \cline{4-5}
&  & & $\alpha \neq 0$ &$\k[u]/(u^2)^{\times 2}$\\ \cline{2-5} 
&{\multirow{3}{*}{$\alpha x^2 + \beta y^2 + z^2$}}  & {\multirow{3}{*}{\begin{tabular}{c}$\k[x,y,z]/(x^2 - yz - \alpha z^2,$ \\ $y^2 - xz - \beta z^2)$\end{tabular}}}  & $\Delta_{\alpha,\beta} \neq 0$ & $\k^4$\\  \cline{4-5}
& & & $\alpha = \frac{3}{4}\mu , \beta = \frac{3}{4}\mu^2$ & $\k[u]/(u^3)\times \k$ \\ \cline{4-5}
& & & Otherwise & $\k[u]/(u^2) \times \k^2$\\ \hline
\end{tabular}
\begin{tablenotes}
\item[*] $\alpha,\beta \in \k$, and $\mu$ is a $3$rd root of unity. 
\item[**] $\Delta_{\alpha,\beta}$ is the discriminant of Equation \ref{eq-nc}. \end{tablenotes}
\end{threeparttable}
\end{center}
\end{table}

\begin{proof}
We will give the calculation of $C(A)$ for $\mathcal{N}$. The other cases are similar. By Proposition \ref{cd-prop}, we know that for a pair $(S,f)$, the Clifford deformation $S^!(\theta_f)$ is a $\mathbbm{Z}_2$-graded algebra and $S^!(\theta_f)_0 \cong C(A)$. So we will give the corresponding Clifford deformations (we still use $x,y,z$ for $x^*,y^*,z^*$) and then calculate the degree zero part of Clifford deformations. 

For $(\mathcal{N}, \alpha x^2 + y^2)$ where $\alpha \in \k$, the associated Clifford deformation $\mathcal{N}^!(\theta_f)$ is 
$$
\k [x,y,z] / (x^2 - yz - \alpha, y^2 - xz - 1, z^2). 
$$
Its degree zero part $\mathcal{N}^!(\theta_f)_0$ is generated by $xy,xz,yz$. Let $u = xy, v = yz$, then $uv = xz, v^2 = 0, u^2 = \alpha uv + v + \alpha$. Thus the algebra $C(A)$ is isomorphic to $\k[u,v] / (u^2 - \alpha uv - v - \alpha, v^2)$.  If $\alpha = 0$, then $\k[u,v] / (u^2 - v, v^2) \cong \k[u]/(u^4)$. If $\alpha \neq 0$, then there is a complete set of primitive central idempotents in $C(A)$
$$
r := \frac{1}{2} + \frac{1}{2\sqrt{\alpha}}u - \frac{\sqrt{\alpha}}{4}v - \frac{1}{4\sqrt{\alpha^3}}uv, \ r': = \frac{1}{2} - \frac{1}{2\sqrt{\alpha}}u + \frac{\sqrt{\alpha}}{4}v + \frac{1}{4\sqrt{\alpha^3}}uv.  
$$
In $rC(A)$, note $\sqrt{\alpha}rv = ruv, \sqrt{\alpha}r+ \frac{1+\sqrt{\alpha^3}}{2\sqrt{\alpha}} rv = ru, rv^2 = 0$, thus $rC(A) \cong \k[u]/(u^2)$. Similarly, $r'C(A) \cong \k[u]/(u^2)$. It follows that $C(A)$ is isomorphic to $\k[u]/(u^2) \times \k[u]/(u^2)$.

For $(\mathcal{N}, \alpha x^2 + \beta y^2 + z^2)$ where $\alpha, \beta \in \k$, the associated Clifford deformation $\mathcal{N}^!(\theta_f)$ is 
$$
\k [x,y,z] / (x^2 - yz - \alpha, y^2 - xz - \beta, z^2 - 1).
$$
Its degree zero part $\mathcal{N}^!(\theta_f)_0$ is generated by $xy,xz,yz$. Let $u = yz$, then $u^2 - \beta = xz, u^3 - \beta u = xy, u^4 = 2\beta u^2 + u - \beta^2 + \alpha$. Thus the algebra $C(A)$ is isomorphic to $\k[u]/(u^4 - 2 \beta u^2 - u - \alpha + \beta^2)$. Thus $C(A)$ is determined by the quartic equation 
\begin{equation} \label{eq-nc}
u^4 - 2 \beta u^2 - u - \alpha + \beta^2 = 0
\end{equation}
which has discriminant
\begin{equation} \label{eq-nc-delta}
\Delta_{\alpha,\beta} = - 27 - 256 \alpha^3 + 288 \alpha \beta + 256 \alpha^2 \beta^2 - 256 \beta^3.  
\end{equation}
When $\Delta_{\alpha,\beta} \neq 0$, Equation \ref{eq-nc} has no multiple root, and thus $C(A) \cong \k^4$. Simple cases are $\alpha = \beta = 1, \alpha = \beta = 0$. Then we consider situations when Equation \ref{eq-nc} has multiple roots. We claim that Equation \ref{eq-nc} does not have a root of multiplicity $4$ or two double roots. If Equation \ref{eq-nc} has a root $a \in \k$ of multiplicity $4$. Then by $(u-a)^4 = u^4 - 2 \beta u^2 - u - \alpha + \beta^2$, we have
$$
-4a = 0, -4a^3 = -1
$$
which is a contradiction. If Equation \ref{eq-nc} has two double roots $a,b \in \k$, then by $(u-a)^2(u-b)^2 = u^4 - 2 \beta u^2 - u - \alpha + \beta^2$, we have
$$
-2(a+b) = 0, -2ab(a+b) = -1
$$
which is a contradiction. Assume Equation \ref{eq-nc} has a triple root $a \in \k$. Then by $(u-a)^3(u-b) = u^4 - 2 \beta u^2 - u - \alpha + \beta^2$ where $b \in \k$, we have that 
$$
-3a-b = 0, 3a^2 + 3ab = -2\beta, -3a^3 -3a^2b = -1, a^3b = \alpha + \beta^2.
$$
It follows that $\alpha = \frac{3}{4} \mu , \beta = \frac{3}{4} \mu^2$ where $\mu$ is a $3$rd root of unity. In this case, $C(A) \cong \k[u]/(u^3) \times \k$. The last situation is that Equation \ref{eq-nc} has only one double root and thus $C(A) \cong \k[u]/(u^2) \times \k^2$. After similar calculation as above, we find that there are infinite choices of $\alpha,\beta$ satisfying this condition. A specific case is that $\alpha = \beta = -\frac{1}{4}$. 

\end{proof}

By Table \ref{tab-ca} and Proposition \ref{nqh-prop1}, we have the following corollary immediately. 

\begin{cor}
Let $\alpha, \beta \in \k$. Then
\begin{itemize}
\item[(1)] $\mathcal{S}/(x^2 + y^2 + z^2)$ is a noncommutative isolated singularity;
\item[(2)] $\mathcal{S'}/(\alpha x^2 + y^2 + z^2)$ is a noncommutative isolated singularity if and only if $\alpha \neq \pm1$; 
\item[(3)] $\mathcal{N}/(\alpha x^2 + \beta y^2 + z^2)$ is a noncommutative isolated singularity if and only if $\Delta_{\alpha,\beta} \neq 0$.
\end{itemize}
\end{cor}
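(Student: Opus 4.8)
The plan is to deduce everything from Proposition \ref{nqh-prop1}(5), which states that $A = S/(f)$ is a noncommutative isolated singularity if and only if $C(A)$ is semisimple, together with the explicit computation of $C(A)$ recorded in Table \ref{tab-ca}. Since each $S$ occurring in the three items is determined by a symmetric regular superpotential, Theorem \ref{thm-srs} guarantees that $A^!$ is commutative, so that $C(A) = A^![(f^!)^{-1}]_0$ is a finite-dimensional commutative $\k$-algebra. Because $\k$ is algebraically closed, such an algebra is semisimple if and only if it is reduced, i.e. if and only if it is isomorphic to $\k^n$ for some $n$; in particular $\k[u]/(u^k)$ with $k \geq 2$, and any algebra having such a $\k[u]/(u^k)$ as a direct factor, fails to be semisimple, since it contains a nonzero nilpotent element. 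This elementary criterion is all the ring theory the argument needs.

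With this in hand, I would simply read the answer off Table \ref{tab-ca}. For (1), the table gives $C(\mathcal{S}/(x^2 + y^2 + z^2)) \cong \k^4$, which is semisimple, so $\mathcal{S}/(x^2 + y^2 + z^2)$ is a noncommutative isolated singularity. For (2), the table gives $C(\mathcal{S}'/(\alpha x^2 + y^2 + z^2)) \cong \k^4$ when $\alpha \neq \pm 1$ and $\cong \k[u]/(u^2) \times \k^2$ when $\alpha = \pm 1$; the former is semisimple and the latter is not, so $\mathcal{S}'/(\alpha x^2 + y^2 + z^2)$ is a noncommutative isolated singularity precisely when $\alpha \neq \pm 1$. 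For (3), the table gives $C(\mathcal{N}/(\alpha x^2 + \beta y^2 + z^2)) \cong \k^4$ when $\Delta_{\alpha,\beta} \neq 0$, whereas in the remaining cases the algebra is $\k[u]/(u^3) \times \k$ or $\k[u]/(u^2) \times \k^2$; again only $\k^4$ is semisimple, so $\mathcal{N}/(\alpha x^2 + \beta y^2 + z^2)$ is a noncommutative isolated singularity if and only if $\Delta_{\alpha,\beta} \neq 0$.

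There is essentially no obstacle: the substance has all been front-loaded into Theorem \ref{ca-thm1} and Table \ref{tab-ca}, and the corollary is pure bookkeeping once one records that a finite-dimensional commutative $\k$-algebra is semisimple exactly when it has no nonzero nilpotents. The one point meriting a line of justification is that in case (3) both the row $\alpha = \tfrac34\mu,\ \beta = \tfrac34\mu^2$ and the ``Otherwise'' row fall under $\Delta_{\alpha,\beta} = 0$ (that is, correspond to a repeated root of Equation \ref{eq-nc}), which is immediate from the case analysis in the proof of Theorem \ref{ca-thm1}; hence the dichotomy ``semisimple'' versus ``not semisimple'' for $C(A)$ coincides with ``$\Delta_{\alpha,\beta} \neq 0$'' versus ``$\Delta_{\alpha,\beta} = 0$''.
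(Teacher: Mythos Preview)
Your proposal is correct and follows essentially the same approach as the paper, which simply states that the corollary is immediate from Table \ref{tab-ca} and Proposition \ref{nqh-prop1}. Your additional remarks on when a finite-dimensional commutative $\k$-algebra is semisimple and on why the last two rows for $\mathcal{N}$ fall under $\Delta_{\alpha,\beta}=0$ are just spelled-out versions of what the paper leaves implicit.
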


By Table \ref{tab-ca}, Lemma \ref{cd-prop-2}, Theorem \ref{thm-srs}, and \cite[Lemma 2.1]{dTdVPre}, we have the following result. 

\begin{cor} \label{ca-thm2}
The set of isomorphism classes of $4$-dimensional commutative Frobenius algebras is equal to the set of isomorphism classes of algebras $C(A)$ for noncommutative conics $A$ associated to symmetric regular superpotentials. They are
$$
\k^4, \k[u]/(u^2) \times \k^2, \k[u]/(u^2) \times \k[u]/(u^2), \k[u]/(u^3) \times \k, \k[u]/(u^4), \k[u,v]/(u^2,v^2).
$$
\end{cor}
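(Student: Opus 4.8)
The plan is to prove the asserted equality of sets by establishing the two inclusions separately and then identifying the common set with the six algebras in the statement. First I would record, for an arbitrary noncommutative conic $A = S/(f)$ associated to a symmetric regular superpotential, that $C(A)$ is a $4$-dimensional commutative Frobenius algebra: by Theorem \ref{thm-srs} the quadratic dual $A^!$ is commutative, so Lemma \ref{cd-prop-2} gives that $C(A)$ is Frobenius, and commutativity of $C(A) = A^![(f^!)^{-1}]_0$ follows since it is a localization of the commutative ring $A^!$; finally Proposition \ref{nqh-prop1}(3) gives $\dim_\k C(A) = 2^{3-1} = 4$. This yields the inclusion of the set of algebras $C(A)$ into the set of $4$-dimensional commutative Frobenius algebras.

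For the reverse inclusion I would invoke \cite[Lemma 2.1]{dTdVPre}, which lists every $4$-dimensional commutative Frobenius algebra up to isomorphism as one of
$$
\k^4,\ \k[u]/(u^2)\times\k^2,\ \k[u]/(u^2)\times\k[u]/(u^2),\ \k[u]/(u^3)\times\k,\ \k[u]/(u^4),\ \k[u,v]/(u^2,v^2),
$$
and then exhibit each one as some $C(S/(f))$ by reading off Table \ref{tab-ca}: $\k[u,v]/(u^2,v^2)$ from $(\mathcal{S},x^2)$, $\k[u]/(u^2)\times\k[u]/(u^2)$ from $(\mathcal{S},x^2+y^2)$, $\k^4$ from $(\mathcal{S},x^2+y^2+z^2)$, $\k[u]/(u^4)$ from $(\mathcal{S'},y^2)$, $\k[u]/(u^2)\times\k^2$ from $(\mathcal{S'},\alpha x^2+y^2+z^2)$ with $\alpha=\pm1$, and $\k[u]/(u^3)\times\k$ from $(\mathcal{N},\tfrac34\mu\,x^2+\tfrac34\mu^2 y^2+z^2)$ with $\mu$ a $3$rd root of unity. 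Combining the two inclusions with the list of \cite[Lemma 2.1]{dTdVPre} gives both the claimed equality and the explicit description of the common set.

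There is no serious obstacle here; the corollary is a bookkeeping consequence of the results already in hand, and the only point requiring care is the observation that the Type EC conics need not be analyzed: the first inclusion already forces each $C(A)$ of Type EC to be one of the six algebras above, so omitting that case from Table \ref{tab-ca} does not affect the statement. The slightly delicate ingredient is the correct quotation of the classification of $4$-dimensional commutative Frobenius algebras from \cite{dTdVPre}, and matching its normal forms with the entries of Table \ref{tab-ca}.
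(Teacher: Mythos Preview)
Your proposal is correct and follows essentially the same approach as the paper, which simply cites Table \ref{tab-ca}, Lemma \ref{cd-prop-2}, Theorem \ref{thm-srs}, and \cite[Lemma 2.1]{dTdVPre} without further elaboration. Your write-up makes explicit the two inclusions and the dimension count via Proposition \ref{nqh-prop1}(3), and correctly observes that the Type EC case is irrelevant once the first inclusion is established; this is exactly the reasoning the paper's terse reference list is pointing at.
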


At last, we give some calculations of $C(A)$ for Type EC. 

\begin{example} \label{ca-tpye-ec} {\rm 
For  $(\mathcal{E}, \alpha x^2 +\beta y^2 + z^2)$ where $\alpha,\beta \in \k$, its Clifford deformation (we still use $x,y,z$ for $x^*,y^*,z^*$) is 
$$
\k [x,y,z] / (x^2 - \xi yz - \alpha, y^2 - \xi xz - \beta , z^2 - \xi xy - 1). 
$$
The degree zero part is generated by $yz,xz,xy$. Let $u = yz, v = xz, t=xy$. Then we have
\begin{equation*}
\begin{aligned}
u^2 &= \frac{\alpha \xi^2}{(1-\xi^3)} u  + \frac{\xi}{(1-\xi^3)} v + \frac{\beta \xi}{(1-\xi^3)} t+ \frac{\beta}{(1-\xi^3)}, \\
v^2 &= \frac{\xi}{(1-\xi^3)} u + \frac{\beta \xi^2}{(1-\xi^3)} v + \frac{\alpha \xi}{(1-\xi^3)} t + \frac{\alpha}{(1-\xi^3)}, \\
t^2 &= \frac{\beta \xi}{(1-\xi^3)} u  +  \frac{\alpha \xi}{(1-\xi^3)} v +\frac{\xi^2}{(1-\xi^3)} t + \frac{\alpha \beta}{(1-\xi^3)}, \\
uv &= \xi t^2 + t, ut = \xi v^2 + \beta v, vt = \xi u^2 + \alpha u. 
\end{aligned}
\end{equation*}
If $\alpha^3 \neq 1$, then
\begin{equation*}
\begin{aligned}
t &= \frac{(1 - \xi^3)^2}{\xi(1-\alpha^3)}v^3 + \frac{(2\beta\xi^5 + \alpha^2 \xi^3 -2\beta\xi^2 -\alpha^2)}{\xi(1-\alpha^3)} v^2 + \frac{(\beta^2 \xi^4- \alpha \xi^3 + \alpha^2 \beta \xi^2  -\alpha )}{\xi(1-\alpha^3)} v + \frac{(-\alpha \beta\xi^2 + \alpha^3)}{\xi(1-\alpha^3)},\\
u &= \frac{(1-\xi^3)}{\xi} v^2 - \beta\xi v - \alpha t - \frac{\alpha}{\xi}.
\end{aligned}
\end{equation*}
 Moreover, 
\begin{equation} \label{eq-ec-2}
\begin{aligned}
 v^4 &+ \frac{3\beta\xi^2}{(\xi^3 -1)} v^3 + \frac{(3\beta^2 \xi^4 - \alpha\xi^3 - 2\alpha)}{(\xi^3 -1)^2} v^2 +\frac{ \xi^2 (\beta^3 \xi^4  - 2 \alpha \beta \xi^3 +(1+ \alpha^3) \xi  -\alpha \beta
  )}{(\xi^3 -1)^3} v \\
 &+ \frac{(-\alpha \beta^2\xi^4 + (\alpha^3\beta + \beta)\xi^2 -\alpha^2 )}{(\xi^3 -1)^3} = 0.
\end{aligned}
\end{equation}
Thus $C(A)$ is determined by Equation \ref{eq-ec-2}. Let $\Delta_{\xi,\alpha,\beta}$ be the discriminant of Equation \ref{eq-ec-2}. When $\beta^3 \neq 1$, since 
$$
\psi: (\mathcal{E}, \alpha x^2 + \beta y^2 + z^2) \to (\mathcal{E}, \beta x^2 + \alpha y^2 + z^2), \ x \mapsto y, y \mapsto x, z \mapsto z.
$$ 
is an isomorphism, we can get its associated discriminant $\Delta'_{\xi,\alpha,\beta}$ by interchanging $\alpha$ and $\beta$ in $\Delta_{\xi,\alpha,\beta}$. Actually, $\Delta'_{\xi,\alpha,\beta} = \Delta_{\xi,\alpha,\beta}$. Thus if $\alpha^3 \neq 1$ or $\beta^3 \neq 1$, then $C(A) \cong \k^4$ if and only if $\Delta_{\xi,\alpha,\beta} \neq 0$. A simple case for $\Delta_{\xi,\alpha,\beta} \neq 0$ is that $\alpha = \beta = 0$.

}
\end{example}

\section*{}

{\bf Acknowledgements.} I would first like to thank my supervisor Izuru Mori. This paper was completed under his guidance. Then I would like to thank Masaki Matsuno for many useful conversations. I would also like to thank Pieter Belmans, Ji-Wei He, Kenta Ueyama for their suggestions via email.

\end{document}